\newtheorem{theorem}{Theorem}[section]
\newenvironment{proof}[1][Proof]{\noindent \textbf{#1.} }{\ \ \  $\Box$}
\newtheorem{lemma}{Lemma}[section]
\newtheorem{definition}{Definition}[section]
\newtheorem{remark}{Remark}[section]
\title{Fully Coupled Forward-Backward Stochastic Functional Differential Equations and Applications to Quadratic Optimal Control \thanks{This work is supported by the National Natural Science Foundation of China (Grant No. 11301274), the Mathematical Tianyuan Foundation of
China (Grant No. 11126050), the Specialized Research Fund for the
Doctoral Program of Higher Education of China (Grant No.
20113207120002), and Program of Natural Science Research of Jiangsu
Higher Education Institutions of China (Grant No. 13KJB110017).}}
\date{}
 \author{ Xiaoming Xu\thanks{E-mail: xmxu@njnu.edu.cn}
 \\ \small{Institute of Finance and Statistics, School of Mathematical Sciences,} \\
 \small{Nanjing Normal University, Nanjing, 210023, China}
 }
\begin{document}

\maketitle

\begin{abstract}
In this paper, we consider the fully coupled forward-backward
stochastic functional differential equations (FBSFDEs) with
stochastic functional differential equations as the forward
equations and the generalized anticipated backward stochastic
differential equations as the backward equations. We will prove the
existence and uniqueness theorem for FBSFDEs. As an application, we
deal with a quadratic optimal control problem for functional
stochastic systems, and get the explicit form of the optimal control
by virtue of FBSFDEs.
\\
\par $\textit{Keywords:}$ stochastic functional differential equation, generalized anticipated backward stochastic
differential equation, forward-backward stochastic functional
differential equation, quadratic optimal control, functional
stochastic system
\end{abstract}



\section{Introduction}

Backward stochastic differential equation (BSDE) was considered the
general form the first time by Pardoux-Peng \cite{PP1} in 1990. In
the last twenty years, the theory of BSDEs has been studied with
great interest (see e.g. \cite{KPQ, PW, PY}). One hot topic is the
forward-backward stochastic differential equation (FBSDE) (see e.g.
\cite{HP, MPY, Y}), due to its wide applications in the
pricing/hedging problem, in the stochastic control and game theory
(see e.g. \cite{CW, CM, PW, W0, W, YJ}).

In the previous results, the FBSDE is mostly of the form
\begin{equation*}
\left\{
\begin{tabular}{l}
$dX_t=b(t, X_t, Y_t, Z_t)dt+\sigma(t, X_t, Y_t, Z_t)dB_t;$ \vspace{2mm}\\
$-dY_t=f(t, X_t, Y_t, Z_t)dt-Z_tdB_t;$ \vspace{2mm}\\
$X_0=a$ \qquad $Y_T=\Phi(X_T),$
\end{tabular}\right.
\end{equation*}
where the forward SDE is the state equation, and the BSDE is the
dual equation appearing in the control system.

However, many natural and social phenomena shows that the state
process at time $t$ depends not only on its present state but also
its past history. Similarly, for the dual process, its value at time
$t$ depends not only on its present value but also its future value.
Also motivated by the work of Peng and Yang \cite{PY}, recently Chen
and Wu \cite{CW} studied the following general FBSDE:
\begin{equation*}
\left\{
\begin{tabular}{ll}
$dX_t=b(t, X_t, Y_t, Z_t, X_{t-\delta})dt+\sigma(t, X_t, Y_t, Z_t, X_{t-\delta})dB_t,$ & $t\in [0,T];$ \vspace{2mm}\\
$-dY_t=f(t, X_t, Y_t, Z_t, Y_{t+\delta}, Z_{t+\delta})dt-Z_tdB_t,$ & $t\in [0, T];$ \vspace{2mm}\\
$X_t= \rho_t,$ & $t\in[-\delta, 0];$ \vspace{2mm} \\
$Y_T=\Phi(X_T),$ \quad $Y_t =\xi_t$ & $t\in(T, T+\delta];$
\vspace{2mm} \\
$Z_t= \eta_t,$ & $t\in[T, T+\delta],$
\end{tabular}\right.
\end{equation*}
where $\delta \geq 0$, and the BSDE, as the dual equation, is just
of the form considered in \cite{PY}.

Easily we can find that the case that Chen and Wu \cite{CW}
considered is only a special case, where the value at time $t$
depends on that at time point $t$ and at another time point
$t-\delta$ (or $t+\delta$), that is to say, the influence brought by
the other time intervals is ignored.

Hence, it is necessary for us to study the following general case:
\begin{equation*}
\left\{
\begin{tabular}{rlll}
$dX_t$ &=& $b(t, \{X_r\}_{r\in [-M, t]}, Y_t, Z_t)dt + \sigma(t, \{X_r\}_{r\in [-M, t]}, Y_t, Z_t)dB_t,$ & $t\in [0, T];$ \vspace{2mm}\\
$-dY_t$ &=& $f(t, X_t, \{Y_r\}_{r\in [t, T+K]}, \{Z_r\}_{r\in [t,
T+K]})dt - Z_t dB_t, $ & $ t\in[0, T];$ \vspace{2mm}
\\
$X_t$ &=& $\rho_t,$ & $t\in [-M, 0];$ \vspace{2mm}
\\
$Y_t$ &=& $\xi_t,$\ \ \ $Z_t\ \ =\ \ \eta_t,$ & $t\in[T, T+K]$
\end{tabular}\right.
\end{equation*}
with $M \geq 0$ and $K \geq 0$, where the state process and the dual
process are given in the form of stochastic functional differential
equations (see e.g. Mohammed \cite{M}) and generalized anticipated
BSDEs respectively, and the latter is just the new type of BSDEs
studied by Yang \cite{Y} (see also Yang and Elliott \cite{YE}).

We prove that under proper assumptions, the solution of the above
equation exists uniquely (see Section $3$). Then in Section $4$, as
an application, we deal with an optimal control problem for the
following functional stochastic system:
\begin{equation*}
\left\{
\begin{tabular}{rlll}
$dX_t$ &=& $(A_t \int_{-M}^t X_sds + C_t v_t)dt + (D_t \int_{-M}^t X_sds + F_t v_t) dB_t,$ & $t\in [0, T];$ \vspace{2mm}\\
$X_t$ &=& $\rho_t,$ & $t\in [-M, 0],$
\end{tabular}\right.
\end{equation*}
where $v_\cdot$ is a control process. Our aim is to minimize the
classical quadratic optimal control cost function. For this problem,
we can get the explicit unique optimal control by virtue of the
results obtained in the previous section.

Next we first make some preliminaries.

\section{Preliminaries}

Let $\{B_t; t\geq 0\}$ be a $d$-dimensional standard Brownian motion
on a probability space $(\Omega, \mathcal{F}, P)$ and
$\{\mathcal{F}_t; t\geq 0\}$ be its natural filtration. Denote by
$|\cdot|$ the norm in $\mathbb{R}^n$, and $\langle \cdot, \cdot
\rangle$ denotes the inner product. Given $T
>0,$ we will use the following notations:
\begin{itemize}
\item{$C(-M, 0; \mathbb{R}^n)$ := $\{\varphi_\cdot: [-M, 0] \rightarrow \mathbb{R}^n$ $|$ $\varphi_\cdot$ satisfies $\sup_{-M \leq t \leq 0} |\varphi_t|< +\infty\};$}

\item{$L^2(\mathcal{F}_T; \mathbb{R}^n)$ := $\{\xi\in \mathbb{R}^n$
$|$ $\xi$ is an $\mathcal{F}_T$-measurable random variable such that
$E|\xi|^2< + \infty\};$}

\item{$L_{\mathcal{F}}^2(0, T; \mathbb{R}^n)$ := $\{ \varphi_\cdot:
\Omega\times [0, T]\rightarrow \mathbb{R}^n$ $|$ $\varphi_\cdot$ is
an $\mathcal{F}_t$-progressively measurable process such that
$E\int_0^T |\varphi_t|^2dt< + \infty\}.$}
\end{itemize}

\subsection{Generalized Anticipated Backward Stochastic Differential Equations}

Consider the following generalized anticipated backward stochastic
differential equation (GABSDE):
\begin{equation}\label{chap5.equation:Yang}
\left\{
\begin{tabular}{rlll}
$-dY_t$ &=& $f(t, \{Y_r\}_{r\in [t, T+K]}, \{Z_r\}_{r\in [t,
T+K]})dt-Z_tdB_t, $ & $
t\in[0, T];$\\[1.5mm]
$Y_t$ &=& $\xi_t, $ & $t\in[T, T+K];$\\[1.5mm]
$Z_t$ &=& $\eta_t, $ & $t\in[T, T+K].$
\end{tabular}\right.
\end{equation}

For the generator $f(\omega, t, \{y_r\}_{r\in [t, T+K]},
\{z_r\}_{r\in [t, T+K]}): \Omega \times [0, T]\times
L_\mathcal{F}^2(t, T+K; \mathbb{R}^m)\times L_\mathcal{F}^2(t, T+K;
\mathbb{R}^{m\times d})\rightarrow L^2 (\mathcal{F}_t;
\mathbb{R}^m),$ we use several hypotheses (see Yang \cite{Y}):

${\bf{(A2.1)}}$ There exists a constant $L > 0$ such that for each
$t\in [0, T],$ $y_\cdot, y_\cdot^\prime \in L_\mathcal{F}^2(0, T+K;
\mathbb{R}^m),$ $z_\cdot, z_\cdot^\prime \in L_\mathcal{F}^2(0, T+K;
\mathbb{R}^{m\times d}),$ the following holds:
\begin{align*}
& E[\int_t^T |f(s, \{y_r\}_{r\in [s, T+K]}, \{z_r\}_{r\in [s,
T+K]})-f(s, \{y_r^\prime\}_{r\in [s, T+K]}, \{z_r^\prime\}_{r\in [s,
T+K]})|^2ds] \\
& \leq L E[\int_t^{T+K}(|y_s-y_s^\prime|^2+|z_s-z_s^\prime|^2])ds];
\end{align*}

${\bf{(A2.1^\prime)}}$ There exists a constant $L^\prime > 0$ such
that for each $t\in [0, T],$ $y_\cdot, y_\cdot^\prime \in
L_\mathcal{F}^2(0, T+K; \mathbb{R}^m),$ $z_\cdot, z_\cdot^\prime \in
L_\mathcal{F}^2(0, T+K; \mathbb{R}^{m\times d}),$ the following
holds:
\begin{align*}
& E[\int_t^T e^{\theta s}|f(s, \{y_r\}_{r\in [s, T+K]},
\{z_r\}_{r\in [s, T+K]})-f(s, \{y_r^\prime\}_{r\in [s, T+K]},
\{z_r^\prime\}_{r\in [s,
T+K]})|^2ds] \\
& \leq L^\prime E[\int_t^{T+K}e^{\theta
s}(|y_s-y_s^\prime|^2+|z_s-z_s^\prime|^2])ds],
\end{align*}
where $\theta \geq 0$ is an arbitrary constant;

${\bf{(A2.2)}}$ $E[\int_0^T |f(s, 0, 0)|^2ds]< + \infty.$

\begin{remark}
In fact, $(A2.1) \Leftrightarrow (A2.1^\prime)$, see Remark 2.2.1 of
Yang (2007).
\end{remark}

By using the fixed point theorem, Yang \cite{Y} (see also Yang and
Elliott \cite{YE}) proved the existence and uniqueness theorem for
GABSDEs:

\begin{theorem}\label{thm:gabsde}
Assume that $f$ satisfies $(A2.1)$ and $(A2.2)$, then for arbitrary
given terminal conditions $(\xi_\cdot, \eta_\cdot) \in
L_\mathcal{F}^2(T, T+K; \mathbb{R}^m)\times L_\mathcal{F}^2(T, T+K;
\mathbb{R}^{m\times d}),$ the GABSDE (\ref{chap5.equation:Yang}) has
a unique solution, i.e., there exists a unique pair of
$\mathcal{F}_t$-adapted processes $(Y_\cdot, Z_\cdot)\in
L_\mathcal{F}^2(0, T+K; \mathbb{R}^m)\times L_\mathcal{F}^2(0, T+K;
\mathbb{R}^{m\times d})$ satisfying (\ref{chap5.equation:Yang}).
\end{theorem}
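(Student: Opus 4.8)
The plan is to reduce the GABSDE to a contraction mapping on a suitable Banach space of adapted processes. Concretely, I would work on the space $\mathcal{H} := L_\mathcal{F}^2(0, T+K; \mathbb{R}^m)\times L_\mathcal{F}^2(0, T+K; \mathbb{R}^{m\times d})$, but equip it with the equivalent weighted norm
\[
\|(Y_\cdot, Z_\cdot)\|_\theta^2 := E\Big[\int_0^{T+K} e^{\theta t}\big(|Y_t|^2 + |Z_t|^2\big)\,dt\Big]
\]
for a constant $\theta > 0$ to be chosen large at the end; by Remark 2.1 the Lipschitz hypothesis $(A2.1)$ is equivalent to $(A2.1')$, so we may freely use the weighted Lipschitz estimate. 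Given a pair $(U_\cdot, V_\cdot)\in \mathcal{H}$, I would first extend it by the prescribed terminal data on $(T, T+K]$, i.e. set $U_t := \xi_t$, $V_t := \eta_t$ there, then feed $\{U_r\}_{r\in[t,T+K]}, \{V_r\}_{r\in[t,T+K]}$ into the generator and solve the resulting \emph{classical} BSDE
\[
-dY_t = f\big(t, \{U_r\}_{r\in[t,T+K]}, \{V_r\}_{r\in[t,T+K]}\big)\,dt - Z_t\,dB_t,\qquad t\in[0,T],
\]
with terminal condition $Y_T = \xi_T$, and finally glue on $Y_t = \xi_t$, $Z_t = \eta_t$ on $(T, T+K]$. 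The key point making this step legitimate is that the driver $g(t) := f(t, \{U_r\}_{r\in[t,T+K]}, \{V_r\}_{r\in[t,T+K]})$ is an $\mathcal{F}_t$-progressively measurable process lying in $L_\mathcal{F}^2(0,T;\mathbb{R}^m)$ — this follows by combining $(A2.2)$ with $(A2.1)$ applied to the pair $(U,V)$ against $(0,0)$ — so the classical Pardoux–Peng existence and uniqueness theorem produces a unique $(Y_\cdot, Z_\cdot)\in \mathcal{H}$. This defines a map $I: (U_\cdot, V_\cdot)\mapsto (Y_\cdot, Z_\cdot)$ whose fixed points are exactly the solutions of the GABSDE.

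Next I would prove $I$ is a contraction for $\theta$ large. Take two inputs $(U,V)$ and $(U',V')$, write $(Y,Z) = I(U,V)$, $(Y',Z') = I(U',V')$, and set $\hat{U} = U - U'$ etc. Both $\hat{Y}_t$ and $\hat{Z}_t$ vanish on $(T,T+K]$ since the terminal data are the same. Applying Itô's formula to $e^{\theta t}|\hat{Y}_t|^2$ on $[0,T]$, taking expectations, using that the stochastic integral has zero expectation, and dropping the nonnegative boundary term at $t=0$, I get
\[
\theta E\!\int_0^T \!e^{\theta t}|\hat{Y}_t|^2 dt + E\!\int_0^T \!e^{\theta t}|\hat{Z}_t|^2 dt
\;\le\; 2 E\!\int_0^T \!e^{\theta t}\langle \hat{Y}_t,\, \hat{g}_t\rangle\, dt,
\]
where $\hat{g}_t := f(t,\{U_r\},\{V_r\}) - f(t,\{U'_r\},\{V'_r\})$. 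Bounding the right side by $\frac{\theta}{2} E\!\int_0^T e^{\theta t}|\hat{Y}_t|^2 dt + \frac{2}{\theta} E\!\int_0^T e^{\theta t}|\hat{g}_t|^2 dt$ and invoking $(A2.1')$ (with $t$ replaced by $0$ in its statement) to dominate $E\!\int_0^T e^{\theta t}|\hat{g}_t|^2 dt$ by $L' E\!\int_0^{T+K} e^{\theta t}(|\hat{U}_t|^2 + |\hat{V}_t|^2)\,dt$, I obtain
\[
\frac{\theta}{2}\, E\!\int_0^{T+K}\! e^{\theta t}\big(|\hat{Y}_t|^2 + |\hat{Z}_t|^2\big) dt
\;\le\; \Big(\frac{\theta}{2}-1\Big)^{+}\!\!\cdot 0 \;+\; \frac{2L'}{\theta}\, E\!\int_0^{T+K}\! e^{\theta t}\big(|\hat{U}_t|^2 + |\hat{V}_t|^2\big) dt
\]
after also handling the $|\hat{Z}|^2$ term (which only helps); a cleaner bookkeeping gives $\|(\hat Y,\hat Z)\|_\theta^2 \le \frac{C L'}{\theta}\|(\hat U,\hat V)\|_\theta^2$ for an absolute constant $C$, so choosing $\theta > C L'$ makes $I$ a strict contraction on the Banach space $(\mathcal{H}, \|\cdot\|_\theta)$. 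The Banach fixed point theorem then yields a unique fixed point, which is the desired unique solution.

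The main obstacle I anticipate is the measurability/well-posedness bookkeeping in the fixed-point step rather than the contraction estimate itself: one must check carefully that plugging an element of $L_\mathcal{F}^2(0,T+K;\mathbb{R}^m)\times L_\mathcal{F}^2(0,T+K;\mathbb{R}^{m\times d})$ into $f$ yields, for \emph{almost every} $t$, an element of $L^2(\mathcal{F}_t;\mathbb{R}^m)$, and that $t\mapsto f(t,\{U_r\}_{r\in[t,T+K]},\{V_r\}_{r\in[t,T+K]})$ is progressively measurable and square-integrable — this is exactly where the anticipating (future-dependent) structure of the driver interacts delicately with adaptedness of the BSDE solution, and it is the reason the generator's codomain is specified as $L^2(\mathcal{F}_t;\mathbb{R}^m)$ in the hypotheses. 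A secondary technical point is justifying the Itô expansion and the vanishing of the martingale term, which requires an a priori bound on $(Y,Z)$; this follows from the same estimate applied with $(U',V')=(0,0)$ together with $(A2.2)$. Since all of this is carried out in Yang \cite{Y}, I would cite that reference for the details and present only the skeleton above.
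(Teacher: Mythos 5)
Your proposal is correct and takes essentially the same route as the paper: the paper does not reprove Theorem \ref{thm:gabsde} but quotes it from Yang \cite{Y}, explicitly describing that proof as an application of the fixed point theorem, and your weighted-norm contraction (solve the classical BSDE with the anticipated arguments frozen, then contract in $\|\cdot\|_\theta$ using $(A2.1^\prime)$) is precisely that argument — it also mirrors the paper's own proof of Theorem \ref{thm:sfde}, with the weight $e^{\theta t}$ in place of $e^{-\theta t}$ because the dependence points toward the future rather than the past. The only caveat, an imprecision already present in the paper's formulation, is that $\xi_T$ should be understood as a separately prescribed element of $L^2(\mathcal{F}_T;\mathbb{R}^m)$, since $\xi_\cdot\in L_\mathcal{F}^2(T,T+K;\mathbb{R}^m)$ determines $\xi_t$ only for almost every $t$.
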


\begin{remark}\label{remark:gabsde}
It should be mentioned here that, in fact condition $(A2.1)$ can be
weaken to $(A2.1^{\prime\prime})$, which says

${\bf{(A2.1^{\prime\prime})}}$ There exists a constant
$L^{\prime\prime}
> 0$ such that for each $y_\cdot, y_\cdot^\prime \in
L_\mathcal{F}^2(0, T+K; \mathbb{R}^m),$ $z_\cdot, z_\cdot^\prime \in
L_\mathcal{F}^2(0, T+K; \mathbb{R}^{m\times d}),$ the following
holds:
\begin{align*}
& E[\int_0^T |f(s, \{y_r\}_{r\in [s, T+K]}, \{z_r\}_{r\in [s,
T+K]})-f(s, \{y_r^\prime\}_{r\in [s, T+K]}, \{z_r^\prime\}_{r\in [s,
T+K]})|^2ds] \\
& \leq L^{\prime\prime}
E[\int_0^{T+K}(|y_s-y_s^\prime|^2+|z_s-z_s^\prime|^2])ds].
\end{align*}

This can be easily checked from the detailed proofs of the theorem.
\end{remark}

\begin{remark}
Let us give some examples of generator functions satisfying
$(A2.1)$. Assume that $g(\omega, t, y, z): \Omega\times [0, T]\times
\mathbb{R}^m\times \mathbb{R}^{m\times d}\rightarrow \mathbb{R}^m$
is $\mathcal{F}_t$-adapted and Lipschitz in $(y, z)$, i.e., there
exists a constant $L_g>0$ such that $|g(t, y, z)-g(t, y^\prime,
z^\prime)|\leq L_g(|y-y^\prime|+|z-z^\prime|)$ for any $(y, z),
(y^\prime, z^\prime)\in \mathbb{R}^m\times \mathbb{R}^{m\times d}.$
Then we can easily check that $f_1$ and $f_2$ defined below will
satisfy $(A2.1)$:
\begin{align*}
& f_1(t, \{y_r\}_{r\in [t, T+K]}, \{z_r\}_{r\in [t, T+K]}):=g(t,
E^{\mathcal{F}_t}[\int_t^{T+K}y_rdr],
E^{\mathcal{F}_t}[\int_t^{T+K}z_rdr]),\\
& f_2(t, \{y_r\}_{r\in [t, T+K]}, \{z_r\}_{r\in [t,
T+K]}):=E^{\mathcal{F}_t}[g(t, \int_t^{T+K}y_rdr,
\int_t^{T+K}z_rdr)].
\end{align*}
\end{remark}

\subsection{Stochastic Functional Differential Equations}

For each $t\in [0, T]$, let
\begin{align*}
& b(t, \{x_r\}_{r\in [-M, t]}): \Omega \times [0, T]\times
L_\mathcal{F}^2(-M, t; \mathbb{R}^n) \rightarrow L^2
(\mathcal{F}_t; \mathbb{R}^n),\\
& \sigma(t, \{x_r\}_{r\in [-M, t]}): \Omega \times [0, T]\times
L_\mathcal{F}^2(-M, t; \mathbb{R}^n) \rightarrow L^2 (\mathcal{F}_t;
\mathbb{R}^{n\times d}).
\end{align*}

Consider the following stochastic functional differential equation
(SFDE):
\begin{equation}\label{equation:sfde}
\left\{
\begin{tabular}{rlll}
$dX_t$ &=& $b(t, \{X_r\}_{r\in [-M, t]})dt + \sigma(t, \{X_r\}_{r\in [-M, t]})dB_t,$ & $t\in [0, T];$ \vspace{2mm}\\
$X_t$ &=& $\rho_t,$ & $t\in [-M, 0],$
\end{tabular}\right.
\end{equation}
where $\rho_\cdot \in C(-M, 0; \mathbb{R}^n)$.

\begin{definition}
A process $X_\cdot: \Omega\times [-M, T] \rightarrow \mathbb{R}^n$
is called an adapted solution of SFDE (\ref{equation:sfde}) if
$X_\cdot \in L_{\mathcal{F}}^2(-M, T; \mathbb{R}^n)$ and it
satisfies (\ref{equation:sfde}).
\end{definition}

It should be mentioned that, Mohammed \cite{M} has considered
several types of SFDEs, and got the existence and uniqueness result
by using Picard iterations. Here in order to make the paper
self-contained, we will provide a proof by applying the fixed point
theorem rather than Picard iterations.

We impose the following assumption:

${\bf{(A2.3)}}$ There exists a constant $L > 0$ such that for each
$x_\cdot, x_\cdot^\prime \in L_\mathcal{F}^2(-M, T; \mathbb{R}^n),$
the following hold:
\begin{align*}
& E[\int_0^T e^{-\theta s} |b(s, \{x_r\}_{r\in [-M, s]})-b(s,
\{x_r^\prime\}_{r\in[-M, s]})|^2ds] \leq L
E\int_{-M}^{T} e^{-\theta s} |x_s-x_s^\prime|^2ds,\\
& E[\int_0^T e^{-\theta s} |\sigma(s, \{x_r\}_{r\in [-M,
s]})-\sigma(s, \{x_r^\prime\}_{r\in[-M, s]})|^2ds] \leq L
E\int_{-M}^{T} e^{-\theta s} |x_s-x_s^\prime|^2ds,
\end{align*}
where $\theta \geq 0$ is an arbitrary constant;

${\bf{(A2.4)}}$ $b(t, 0)\in L_\mathcal{F}^2(0, T; \mathbb{R}^n)$ and
$\sigma(t, 0)\in L_\mathcal{F}^2(0, T; \mathbb{R}^n)$.

%
%

\begin{remark}\label{remark:sfde}
Let us give some examples of coefficients satisfying $(A2.3)$.
Assume that $p(\omega, t, x): \Omega\times [0, T]\times \mathbb{R}^n
\rightarrow \mathbb{R}^n$ and $q(\omega, t, x): \Omega\times [0,
T]\times \mathbb{R}^n \rightarrow \mathbb{R}^{n\times d}$ are
$\mathcal{F}_t$-adapted and Lipschitz w.r.t. $x$, i.e., there exist
constants $L_p>0$, $L_q>0$ such that $|p(t, x)-p(t, x^\prime)|\leq
L_p |x-x^\prime|$, $|q(t, x)-q(t, x^\prime)|\leq L_q |x-x^\prime|$
for any $x, x^\prime \in \mathbb{R}^n$. Then we can easily check
that $b_1$, $b_2$, $\sigma_1$ and $\sigma_2$ defined below will
satisfy $(A2.3)$:
\begin{align*}
& b_1(t, \{x_r\}_{r\in [-M, t]}):=p(t, \int_{-M}^tx_rdr),\quad
b_2(t, \{x_r\}_{r\in [-M, t]}):=\int_{-M}^t p(r, x_r)dr,\\
& \sigma_1(t, \{x_r\}_{r\in [-M, t]}):=q(t, \int_{-M}^tx_rdr),\quad
\sigma_2(t, \{x_r\}_{r\in [-M, t]}):=\int_{-M}^t q(r, x_r)dr.
\end{align*}
\end{remark}

We now give the existence and uniqueness result for SFDE
(\ref{equation:sfde}).

\begin{theorem}\label{thm:sfde}
Assume that $(A2.3)$ and $(A2.4)$ hold. Then SFDE
(\ref{equation:sfde}) has a unique adapted solution.
\end{theorem}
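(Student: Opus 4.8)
The plan is to prove existence and uniqueness by a fixed-point argument in a suitably weighted space. Let $\mathcal{H}$ denote $L_{\mathcal{F}}^2(-M, T; \mathbb{R}^n)$ equipped, for a parameter $\theta \geq 0$ to be chosen later, with the equivalent norm $\|X\|_\theta^2 := E\int_{-M}^T e^{-\theta s} |X_s|^2 ds$. Since $e^{-\theta s}$ is bounded above and below by positive constants on the compact interval $[-M,T]$, this norm is equivalent to the usual one and $(\mathcal{H}, \|\cdot\|_\theta)$ is a Banach space. On the closed subset of processes $X_\cdot$ with $X_t = \rho_t$ on $[-M,0]$, define a map $\Gamma$ by setting $(\Gamma X)_t = \rho_t$ for $t \in [-M,0]$ and
\[
(\Gamma X)_t = \rho_0 + \int_0^t b(s, \{X_r\}_{r\in[-M,s]})\, ds + \int_0^t \sigma(s, \{X_r\}_{r\in[-M,s]})\, dB_s, \qquad t \in [0,T].
\]

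First I would check that $\Gamma$ is well defined, i.e. maps $\mathcal{H}$ into itself: using $(A2.3)$ together with $(A2.4)$ one bounds $b(s,\{X_r\})$ and $\sigma(s,\{X_r\})$ in $L^2$ by a constant plus a multiple of $\|X\|$, and then the Burkholder--Davis--Gundy inequality (or simply the It\^o isometry for the stochastic integral) and the Cauchy--Schwarz inequality for the Lebesgue integral give $\Gamma X \in L_{\mathcal{F}}^2(0,T;\mathbb{R}^n)$; combined with $\rho_\cdot \in C(-M,0;\mathbb{R}^n)$ we get $\Gamma X \in \mathcal{H}$. Next, for two processes $X, X'$ agreeing with $\rho$ on $[-M,0]$, I would estimate $\|\Gamma X - \Gamma X'\|_\theta^2$. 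Writing $\Delta_t = (\Gamma X)_t - (\Gamma X')_t$ (which vanishes on $[-M,0]$), one has for $t \in [0,T]$, after multiplying by $e^{-\theta t}$ and taking expectations,
\[
E\, e^{-\theta t}|\Delta_t|^2 \leq 2 E\, e^{-\theta t}\Big(\Big|\int_0^t [b(s,\cdot)-b(s,\cdot')]\,ds\Big|^2 + \Big|\int_0^t [\sigma(s,\cdot)-\sigma(s,\cdot')]\,dB_s\Big|^2\Big);
\]
integrating in $t$ over $[0,T]$, applying Cauchy--Schwarz/Fubini to the drift term and the It\^o isometry/Fubini to the diffusion term, and then invoking $(A2.3)$, the right-hand side is bounded by $C_T L \cdot \|X - X'\|_\theta^2$ for a constant $C_T$ depending only on $T$ (and not on $\theta$). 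Here the weight $e^{-\theta s}$ is essential: it lets the inner time-integral be compared against the full weighted norm of $X - X'$ with a constant that does not blow up.

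The main obstacle — and the reason for introducing $\theta$ — is that the naive constant $C_T L$ need not be less than $1$, so $\Gamma$ is not obviously a contraction. The standard fix is to iterate: since the bound on $E\, e^{-\theta t}|\Delta_t|^2$ is in fact of the form $L\big(\text{(linear in }t)\cdot\sup_{s\le t}\text{-type quantity}\big)$, one shows by induction that the $n$-fold composite $\Gamma^n$ satisfies $\|\Gamma^n X - \Gamma^n X'\|_\theta^2 \leq \frac{(C L T)^n}{n!}\|X - X'\|_\theta^2$, so that $\Gamma^n$ is a strict contraction for $n$ large; then the Banach fixed point theorem (in the form: if some iterate of a map on a complete metric space is a contraction, the map has a unique fixed point) yields a unique $X_\cdot \in \mathcal{H}$ with $\Gamma X = X$. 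Alternatively, and perhaps more cleanly, one derives a differential/Gr\"onwall-type inequality $\frac{d}{dt}\big(E\int_0^t e^{-\theta s}|\Delta_s|^2 ds\big) \leq$ (something), chooses $\theta$ large enough that the relevant coefficient becomes negative, and concludes directly that $\Gamma$ is a contraction for that $\theta$. Either route gives a unique fixed point, and unwinding the definition of $\Gamma$ shows this fixed point is precisely the unique adapted solution of $(\ref{equation:sfde})$; uniqueness among all adapted solutions follows because any solution is a fixed point of $\Gamma$. $\Box$
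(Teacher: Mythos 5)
Your proposal is correct in outline and sets up the same fixed-point map as the paper: freeze the path argument of the coefficients, let the resulting (explicitly integrable) equation define the map, and contract in $L^2_{\mathcal F}(-M,T;\mathbb{R}^n)$ under the weighted norm $\left(E\int_{-M}^T e^{-\theta s}|\cdot|^2ds\right)^{1/2}$. The route you flag as the alternative --- apply It\^o's formula to $e^{-\theta t}|\widehat X_t|^2$, keep the resulting $-\theta E\int_0^t e^{-\theta s}|\widehat X_s|^2ds$ term on the left to absorb the others, and choose $\theta$ large (the paper takes $\theta=2L^2+2L\sqrt{L^2+2}$) so that the map itself is a $\tfrac12$-contraction --- is exactly what the paper does; it needs only the full-interval form of $(A2.3)$ and no iteration. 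Your primary route, the $\Gamma^n$ iteration with the $(CLT)^n/n!$ gain, requires one observation you gloss over: $(A2.3)$ is an \emph{integrated} Lipschitz condition over all of $[0,T]$, not a pointwise-in-$s$ bound, so the classical induction step $E|\Delta^{(n+1)}_t|^2\le C\,E\int_{-M}^t|\Delta^{(n)}_s|^2ds$ for each fixed $t$ is not immediate from the hypothesis as stated. It can be recovered by causality: since $b(s,\cdot)$ and $\sigma(s,\cdot)$ depend only on $\{x_r\}_{r\in[-M,s]}$, applying $(A2.3)$ to the concatenated process equal to $x$ on $[-M,t]$ and to $x'$ on $(t,T]$ localizes the estimate to $E\int_0^t|\widehat b_s|^2ds\le L\,E\int_{-M}^t|\widehat x_s|^2ds$, and likewise for $\sigma$. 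With that supplied both of your routes close, and the existence and uniqueness conclusion agrees with the paper's; the large-$\theta$ argument is the shorter of the two and is the one the paper adopts.
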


\begin{proof}
Let $\theta$ be a nonnegative constant. Now we use the following
norm in $L_{\mathcal{F}}^2(-M, T; \mathbb{R}^n)$:
$$\|v(\cdot)\|_{-\theta}:=(E \int_{-M}^T e^{-\theta s}|v(s)|^2ds)^{\frac{1}{2}},$$
which is equivalent to the original norm of $L_{\mathcal{F}}^2(0, T;
\mathbb{R}^n)$. Henceforth we will find that this new norm is more
convenient for us to construct a contraction mapping.

Let $X_\cdot$ be the unique solution of
\begin{equation*}
\left\{
\begin{tabular}{rlll}
$dX_t$ &=& $b(t, \{x_r\}_{r\in [-M, t]})dt + \sigma(t, \{x_r\}_{r\in [-M, t]})dB_t,$ & $t\in [0, T];$ \vspace{2mm}\\
$X_t$ &=& $\rho_t,$ & $t\in [-M, 0],$
\end{tabular}\right.
\end{equation*}
where $x_\cdot \in L_{\mathcal{F}}^2(-M, T; \mathbb{R}^n)$. Now
introduce a mapping $I$ from $L_{\mathcal{F}}^2(0, T; \mathbb{R}^n)$
into itself by $X_\cdot = I(x_\cdot)$.

Let $x_\cdot^\prime$ be another element of $L_{\mathcal{F}}^2(0, T;
\mathbb{R}^n)$, and define $X_\cdot^\prime =I(x_\cdot^\prime)$. We
make the following notations:
\begin{align*}
& \widehat{x}_\cdot=x_\cdot-x_\cdot^\prime,\
\ \ \widehat{X}_\cdot=X_\cdot-X_\cdot^\prime,\\
& \widehat{b}_t = b(t, \{x_{r}\}_{r\in [-M, t]})-b(t,
\{x_{r}^\prime\}_{r\in [-M, t]}),\\
& \widehat{\sigma}_t = \sigma(t, \{x_{r}\}_{r\in [-M, t]})-\sigma(t,
\{x_{r}^\prime\}_{r\in [-M, t]}).
\end{align*}
Then for any $\theta \geq 0$, applying It\^{o}'s formula to
$e^{-\theta t}|\widehat{X}_t|^2$, and taking expectation, we have
$$E e^{-\theta
t}|\widehat{X}_t|^2 = E \int_0^t (-\theta) e^{-\theta s}
|\widehat{X}_s|^2ds + E \int_0^t e^{-\theta s}
|\widehat{\sigma}_s|^2ds + 2 E \int_0^t e^{-\theta s} \widehat{X}_s
\widehat{b}_s ds.$$ This, together with $(A2.3)$, yields
$$E \int_0^T \theta e^{-\theta s} |\widehat{X}_s|^2ds \leq E
\int_{-M}^T e^{-\theta s} (L^2 + \frac{2 L^2}{\theta})
|\widehat{x}_s|^2 ds + E\int_0^T e^{-\theta s} \frac{\theta}{2}
|\widehat{X}_s|^2ds.$$ Thus if we choose $\theta=2L^2 + 2L
\sqrt{L^2+2}$, and note that $\widehat{X}_s \equiv 0$ for $s\in[-M,
0]$, then we deduce
$$E\int_{-M}^T e^{-\theta
s}|\widehat{X}_s|^2ds \leq \frac{1}{2}E\int_{-M}^{T}e^{-\theta
s}|\widehat{x}_s|^2ds,
$$
so that $I$ is a strict contraction on $L_{\mathcal{F}}^2(-M, T;
\mathbb{R}^n)$. It follows by the fixed point theorem that SFDE
(\ref{equation:sfde}) has a unique solution $X_\cdot\in
L_{\mathcal{F}}^2(-M, T; \mathbb{R}^n)$.
\end{proof}

At the end of this part, for the following SFDE, with the same form
as in Chapter II of Mohammed \cite{M}:
\begin{equation}\label{equation:sfde mohammed}
\left\{
\begin{tabular}{rlll}
$dX_t^\prime$ &=& $b^\prime(t, \{X_r^\prime\}_{r\in [t-M, t]})dt + \sigma^\prime(t, \{X_r^\prime\}_{r\in [t-M, t]})dB_t,$ & $t\in [0, T];$ \vspace{2mm}\\
$X_t^\prime$ &=& $\rho_t,$ & $t\in [-M, 0],$
\end{tabular}\right.
\end{equation}
we also give an existence and uniqueness theorem. Since the method
to prove it is similar to Theorem \ref{thm:sfde}, we omit here. For
\begin{align*}
& b^\prime(t, \{x_r\}_{r\in [t-M, t]}): \Omega \times [0, T]\times
L_\mathcal{F}^2(t-M, t; \mathbb{R}^n) \rightarrow L^2
(\mathcal{F}_t; \mathbb{R}^n),\\
& \sigma^\prime(t, \{x_r\}_{r\in [t-M, t]}): \Omega \times [0,
T]\times L_\mathcal{F}^2(t-M, t; \mathbb{R}^n) \rightarrow L^2
(\mathcal{F}_t; \mathbb{R}^{n\times d}),
\end{align*}
we assume that

${\bf{(A2.3^\prime)}}$ There exists a constant $L^\prime > 0$ such
that for each $x_\cdot, x_\cdot^\prime \in L_\mathcal{F}^2(-M, T;
\mathbb{R}^n),$ the following hold:
\begin{align*}
& E[\int_0^T e^{-\theta s} |b^\prime(s, \{x_r\}_{r\in [s-M,
s]})-b^\prime(s, \{x_r^\prime\}_{r\in[s-M, s]})|^2ds] \leq L
E\int_{-M}^{T} e^{-\theta s} |x_s-x_s^\prime|^2ds,\\
& E[\int_0^T e^{-\theta s} |\sigma^\prime(s, \{x_r\}_{r\in [s-M,
s]})-\sigma^\prime(s, \{x_r^\prime\}_{r\in[s-M, s]})|^2ds] \leq L
E\int_{-M}^{T} e^{-\theta s} |x_s-x_s^\prime|^2ds,
\end{align*}
where $\theta \geq 0$ is an arbitrary constant;

${\bf{(A2.4^\prime)}}$ $b^\prime(t, 0)\in L_\mathcal{F}^2(0, T;
\mathbb{R}^n)$ and $\sigma^\prime(t, 0)\in L_\mathcal{F}^2(0, T;
\mathbb{R}^n)$.

\begin{theorem}
Assume that $(A2.3^\prime)$ and $(A2.4^\prime)$ hold. Then SFDE
(\ref{equation:sfde mohammed}) has a unique adapted solution.
\end{theorem}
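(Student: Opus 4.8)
The plan is to reproduce, almost word for word, the fixed-point argument from the proof of Theorem \ref{thm:sfde}; the only structural change is that the functional argument now runs over the sliding window $[s-M,s]$ rather than the anchored window $[-M,s]$, and assumption $(A2.3^\prime)$ is formulated precisely so that this change leaves every estimate untouched. First I would fix a nonnegative constant $\theta$, endow $L_{\mathcal{F}}^2(-M,T;\mathbb{R}^n)$ with the equivalent weighted norm $\|v(\cdot)\|_{-\theta}=(E\int_{-M}^T e^{-\theta s}|v(s)|^2ds)^{1/2}$, and work on this space (with processes understood to coincide with $\rho_\cdot$ on $[-M,0]$).

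For a fixed $x_\cdot$, let $X_\cdot=I(x_\cdot)$ denote the solution of the It\^{o} equation obtained by freezing the functional arguments, namely $dX_t=b^\prime(t,\{x_r\}_{r\in[t-M,t]})dt+\sigma^\prime(t,\{x_r\}_{r\in[t-M,t]})dB_t$ on $[0,T]$ with $X_t=\rho_t$ on $[-M,0]$. Here $t\mapsto b^\prime(t,\{x_r\}_{r\in[t-M,t]})$ and $t\mapsto\sigma^\prime(t,\{x_r\}_{r\in[t-M,t]})$ are $\mathcal{F}_t$-adapted processes lying in $L_{\mathcal{F}}^2(0,T)$, by the triangle inequality together with $(A2.3^\prime)$ (applied against $x^\prime\equiv 0$) and $(A2.4^\prime)$; so the frozen equation is a classical SDE with square-integrable random coefficients and has a unique solution, whence $I$ is a well-defined self-map of the space.

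Then, taking a second point $x_\cdot^\prime$ with $X_\cdot^\prime=I(x_\cdot^\prime)$ and writing $\widehat{x}_\cdot=x_\cdot-x_\cdot^\prime$, $\widehat{X}_\cdot=X_\cdot-X_\cdot^\prime$, $\widehat{b}_t=b^\prime(t,\{x_r\}_{r\in[t-M,t]})-b^\prime(t,\{x_r^\prime\}_{r\in[t-M,t]})$ and $\widehat{\sigma}_t=\sigma^\prime(t,\{x_r\}_{r\in[t-M,t]})-\sigma^\prime(t,\{x_r^\prime\}_{r\in[t-M,t]})$, I would apply It\^{o}'s formula to $e^{-\theta t}|\widehat{X}_t|^2$ and take expectations to get
$$E e^{-\theta t}|\widehat{X}_t|^2 = -\theta E\int_0^t e^{-\theta s}|\widehat{X}_s|^2ds + E\int_0^t e^{-\theta s}|\widehat{\sigma}_s|^2ds + 2E\int_0^t e^{-\theta s}\langle\widehat{X}_s,\widehat{b}_s\rangle ds.$$
Estimating the cross term by $\frac{\theta}{2}|\widehat{X}_s|^2+\frac{2}{\theta}|\widehat{b}_s|^2$, invoking $(A2.3^\prime)$ for the $\widehat{b}$ and $\widehat{\sigma}$ contributions, and using $\widehat{X}_s\equiv 0$ on $[-M,0]$, I arrive at $\theta E\int_0^T e^{-\theta s}|\widehat{X}_s|^2ds\le(L^2+\frac{2L^2}{\theta})E\int_{-M}^T e^{-\theta s}|\widehat{x}_s|^2ds+\frac{\theta}{2}E\int_0^T e^{-\theta s}|\widehat{X}_s|^2ds$. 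Choosing $\theta=2L^2+2L\sqrt{L^2+2}$ makes the resulting factor equal to $\tfrac12$, so $\|\widehat{X}\|_{-\theta}^2\le\tfrac12\|\widehat{x}\|_{-\theta}^2$ and $I$ is a strict contraction; the Banach fixed point theorem then produces the unique adapted solution of (\ref{equation:sfde mohammed}).

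I expect no genuine obstacle. The sole place where the sliding window enters is the requirement that $(A2.3^\prime)$ bound $\widehat{b}_s$ and $\widehat{\sigma}_s$ by $\int_{-M}^T e^{-\theta s}|\widehat{x}_s|^2ds$, and since $[s-M,s]\subseteq[-M,T]$ for every $s\in[0,T]$ this is built into the hypothesis; everything else is identical to the proof of Theorem \ref{thm:sfde}, which is why it can safely be omitted.
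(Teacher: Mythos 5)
Your proposal is correct and is exactly what the paper intends: the paper omits the proof of this theorem, stating only that "the method to prove it is similar to Theorem \ref{thm:sfde}," and you have carried out that same weighted-norm contraction argument, with the sliding window $[s-M,s]$ absorbed harmlessly by the formulation of $(A2.3^\prime)$. No substantive difference from the paper's (implicit) proof.
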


\section{Fully Coupled Forward-Backward
Stochastic Functional Differential Equations}

In this section, we consider the following fully coupled
forward-backward stochastic functional differential equation
(FBSFDE):
\begin{equation}\label{equation:main}
\left\{
\begin{tabular}{rlll}
$dX_t$ &=& $b(t, \{X_r\}_{r\in [-M, t]}, Y_t, Z_t)dt + \sigma(t, \{X_r\}_{r\in [-M, t]}, Y_t, Z_t)dB_t,$ & $t\in [0, T];$ \vspace{2mm}\\
$-dY_t$ &=& $f(t, X_t, \{Y_r\}_{r\in [t, T+K]}, \{Z_r\}_{r\in [t,
T+K]})dt - Z_t dB_t, $ & $ t\in[0, T];$ \vspace{2mm}
\\
$X_t$ &=& $\rho_t,$ & $t\in [-M, 0];$ \vspace{2mm}
\\
$Y_T$ &=& $\Phi(X_T),$\ \ \ $Y _t\ \ =\ \ \xi_t,$ & $t\in(T, T+K];$
\vspace{2mm}
\\
$Z_t$ &=& $\eta_t, $ & $t\in[T, T+K],$
\end{tabular}\right.
\end{equation}
where
\begin{align*}
& b(t, \cdot, \cdot, \cdot): \Omega \times [0, T]\times
L_\mathcal{F}^2(-M, t; \mathbb{R}^n)\times \mathbb{R}^m \times
\mathbb{R}^{m\times d} \rightarrow L^2
(\mathcal{F}_t; \mathbb{R}^n),\\
& \sigma(t, \cdot, \cdot, \cdot): \Omega \times [0, T]\times
L_\mathcal{F}^2(-M, t; \mathbb{R}^n)\times \mathbb{R}^m \times
\mathbb{R}^{m\times d} \rightarrow L^2
(\mathcal{F}_t; \mathbb{R}^{n\times d}),\\
& f(t, \cdot, \cdot, \cdot): \Omega \times [0, T]\times \mathbb{R}^n
\times L_\mathcal{F}^2(t, T+K; \mathbb{R}^m)\times
L_\mathcal{F}^2(t, T+K; \mathbb{R}^{m\times d})\rightarrow L^2
(\mathcal{F}_t,
\mathbb{R}^m),\\
& \Phi: \Omega\times \mathbb{R}^n \rightarrow \mathbb{R}^m, \
\rho_\cdot \in C(-M, 0; \mathbb{R}^n),\ \xi_\cdot \in
L_{\mathcal{F}}^2(T, T+K; \mathbb{R}^m),\ \eta_\cdot \in
L_{\mathcal{F}}^2(T, T+K; \mathbb{R}^{m\times d}).
\end{align*}

\begin{definition}
A triple of processes $(X_\cdot, Y_\cdot, Z_\cdot): \Omega\times
[-M, T] \times [0, T+K] \times [0, T+K] \rightarrow \mathbb{R}^n
\times \mathbb{R}^m \times \mathbb{R}^{m\times d}$ is called an
adapted solution of FBSFDE (\ref{equation:main}) if $(X_\cdot,
Y_\cdot, Z_\cdot) \in L_{\mathcal{F}}^2(-M, T; \mathbb{R}^n)\times
L_{\mathcal{F}}^2(0, T+K; \mathbb{R}^m)\times L_{\mathcal{F}}^2(0,
T+K; \mathbb{R}^{m\times d})$ and it satisfies FBSFDE
(\ref{equation:main}).
\end{definition}

Given an $m\times n$ full-rank matrix $G$, we use the following
notations:
$$u=\left(
  \begin{array}{c}
    x\\
    y\\
    z
  \end{array}
\right), \left(
\begin{array}{c}
\alpha \\
\beta \\
\gamma
\end{array}
\right)
 =\left(
  \begin{array}{c}
    \{x_r\}_{r\in [-M,\ \cdot]}\\
    \{y_r\}_{r\in [\cdot,\ T+K]}\\
    \{z_r\}_{r\in [\cdot,\ T+K]}
  \end{array}
\right), A(t, u, \alpha, \beta, \gamma)
 =\left(
  \begin{array}{c}
    -G^T f(t, x, \beta, \gamma)\\
    G b(t, \alpha, y, z)\\
    G \sigma(t, \alpha, y, z)
  \end{array}
\right),$$ where $G^T$ denotes the transpose of $G$ and
$G\sigma=(G\sigma_1, G\sigma_2, \cdots, G\sigma_d)$.

Now we introduce the following assumptions:

${\bf{(H3.1)}}$ $E \int_0^T |A(s, u, \alpha, \beta, \gamma)|^2 ds <
+\infty$ for each $(u,\ \alpha,\ \beta,\ \gamma)$;

${\bf{(H3.2)}}$ There exists a constant $L > 0$ such that for each
$x_\cdot, x_\cdot^\prime \in L_\mathcal{F}^2(-M, T; \mathbb{R}^n),$
$y_\cdot, y_\cdot^\prime \in L_\mathcal{F}^2(0, T+K; \mathbb{R}^m),$
$z_\cdot, z_\cdot^\prime \in L_\mathcal{F}^2(0, T+K;
\mathbb{R}^{m\times d}),$ the following hold:
\begin{align*}
& E\int_0^T e^{-\theta s} |b(s, \{x_r\}_{r\in [-M, s]}, y_s,
z_s)-b(s,
\{x_r^\prime\}_{r\in[-M, s]}, y_s^\prime, z_s^\prime)|^2ds \\
& \hspace{1cm} + E\int_0^T e^{-\theta s} |\sigma(s, \{x_r\}_{r\in
[-M, s]}, y_s, z_s)-\sigma(s, \{x_r^\prime\}_{r\in[-M, s]}, y_s^\prime, z_s^\prime)|^2ds \\
& \hspace{1cm} \leq L E\int_{-M}^{T} e^{-\theta s}
|x_s-x_s^\prime|^2ds + L E
\int_0^{T}e^{-\theta s}(|y_s-y_s^\prime|^2+|z_s-z_s^\prime|^2)ds,\\
& E \int_0^T e^{\theta s}|f(s, x_s, \{y_r\}_{r\in [s, T+K]},
\{z_r\}_{r\in [s, T+K]})-f(s, x_s^\prime, \{y_r^\prime\}_{r\in [s,
T+K]},
\{z_r^\prime\}_{r\in [s, T+K]})|^2ds \\
& \hspace{1cm} \leq L E\int_{0}^{T} e^{\theta s}
|x_s-x_s^\prime|^2ds + L E \int_0^{T+K}e^{\theta
s}(|y_s-y_s^\prime|^2+|z_s-z_s^\prime|^2)ds,
\end{align*}
where $\theta \geq 0$ is an arbitrary constant;


${\bf{(H3.3)}}$ $\Phi(x)\in L^2(\mathcal{F}_T; \mathbb{R}^m)$ and it
is uniformly Lipschitz w.r.t. $x\in \mathbb{R}^n$;

${\bf{(H3.4)}}$ $A(\cdot, \cdot, \cdot, \cdot, \cdot)$ and
$\Phi(\cdot)$ satisfy
\begin{align*}
& E \int_0^T \langle A(s, u_s, \alpha_s, \beta_s, \gamma_s)-A(s,
u_s^\prime, \alpha_s^\prime, \beta_s^\prime, \gamma_s^\prime),
u_s-u_s^\prime \rangle ds \\
& \qquad \leq -\lambda_1 E \int_{-M}^T |G \widehat{x}_s|^2ds
-\lambda_2 E\int_0^{T+K} (|G^T
\widehat{y}_s|^2 + |G^T \widehat{z}_s|^2)ds,\\
& \langle \Phi(x)-\Phi(x^\prime), G(x-x^\prime) \rangle \geq \mu |G
\widehat{x}|^2,
\end{align*}
for all $(u,\ \alpha,\ \beta,\ \gamma)$,  $(u^\prime,\
\alpha^\prime,\ \beta^\prime,\ \gamma^\prime)$, $x$ and $x^\prime$,
$\widehat{x}=x-x^\prime$, $\widehat{y}=y-y^\prime$,
$\widehat{z}=z-z^\prime$, where $\lambda_1$, $\lambda_2$ and $\mu$
are given nonnegative constants with $\lambda_1+\lambda_2
>0$, $\lambda_2+\mu > 0$. Moreover, we have $\lambda_1 > 0$, $\mu >
0$ (resp. $\lambda_2 > 0$) when $m > n$ (resp. $n > m$).

We first give the uniqueness theorem.

\begin{theorem}\label{thm:uniqueness}
Assume that $(H3.1)$-$(H3.4)$ hold. Then FBSFDE
(\ref{equation:main}) has at most one adapted solution.
\end{theorem}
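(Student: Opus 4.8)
The plan is to argue by contradiction in the usual FBSDE style: suppose $(X,Y,Z)$ and $(X',Y',Z')$ are two adapted solutions, write $\widehat{X}=X-X'$, $\widehat{Y}=Y-Y'$, $\widehat{Z}=Z-Z'$, and note that on the frozen intervals we have $\widehat{X}_t\equiv 0$ for $t\in[-M,0]$ and $\widehat{Y}_t=\widehat{Z}_t\equiv 0$ for $t\in[T,T+K]$, while $\widehat{Y}_T=\Phi(X_T)-\Phi(X'_T)$. The key object is to apply It\^o's formula to $\langle G\widehat{X}_t, \widehat{Y}_t\rangle$ on $[0,T]$. Because of the cross-coupling built into the matrix $A$ and the sign in front of $G^Tf$, the It\^o expansion of $d\langle G\widehat{X}_t,\widehat{Y}_t\rangle$ is designed so that, after taking expectations and using $\widehat{Y}_T=\Phi(X_T)-\Phi(X'_T)$, $\widehat{X}_0=0$, one gets precisely
\begin{align*}
E\langle \Phi(X_T)-\Phi(X'_T), G\widehat{X}_T\rangle
= E\int_0^T \langle A(s,u_s,\alpha_s,\beta_s,\gamma_s)-A(s,u'_s,\alpha'_s,\beta'_s,\gamma'_s), u_s-u'_s\rangle\, ds,
\end{align*}
where $u=(X,Y,Z)^T$ etc.; here one must be careful that the $\{Y_r\}_{r\in[t,T+K]}$ and $\{Z_r\}_{r\in[t,T+K]}$ arguments of $f$, and the $\{X_r\}_{r\in[-M,t]}$ argument of $b,\sigma$, are exactly what the bracketed $\beta,\gamma,\alpha$ notation encodes, so the monotonicity assumption $(H3.4)$ applies verbatim to the right-hand side.

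Next I would combine the two halves of $(H3.4)$: the second inequality gives $E\langle\Phi(X_T)-\Phi(X'_T),G\widehat{X}_T\rangle\geq \mu E|G\widehat{X}_T|^2\geq 0$, while the first gives that the right-hand integral is $\leq -\lambda_1 E\int_{-M}^T|G\widehat{X}_s|^2ds-\lambda_2 E\int_0^{T+K}(|G^T\widehat{Y}_s|^2+|G^T\widehat{Z}_s|^2)ds\leq 0$. Chaining these forces
\begin{align*}
\mu E|G\widehat{X}_T|^2 + \lambda_1 E\int_{-M}^T|G\widehat{X}_s|^2ds + \lambda_2 E\int_0^{T+K}(|G^T\widehat{Y}_s|^2+|G^T\widehat{Z}_s|^2)ds = 0,
\end{align*}
so every term with a strictly positive coefficient vanishes. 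One then splits into cases according to the structural conditions at the end of $(H3.4)$. If $\lambda_1>0$ then $G\widehat{X}\equiv 0$ on $[-M,T]$; since $m>n$ forces $\lambda_1>0$ and makes $G$ (an $m\times n$ matrix of full rank $n$) injective, $\widehat{X}\equiv 0$ on $[-M,T]$. If instead $n>m$ then $\lambda_2>0$, so $G^T\widehat{Y}\equiv 0$ and $G^T\widehat{Z}\equiv 0$; here $G^T$ is $n\times m$ of full rank $m$, hence injective, giving $\widehat{Y}\equiv 0$, $\widehat{Z}\equiv 0$ on $[0,T+K]$. The remaining balanced case $m=n$ is handled using $\lambda_1+\lambda_2>0$ and $\lambda_2+\mu>0$ together with $G$ now square and invertible.

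Once one of $\widehat{X}\equiv 0$ or $(\widehat{Y},\widehat{Z})\equiv 0$ is established, I would plug it back into the remaining equation to recover the other: if $\widehat{X}\equiv 0$, the backward equation for $\widehat{Y}$ becomes a GABSDE with zero terminal data whose generator difference is controlled by $(H3.2)$, so Theorem~\ref{thm:gabsde} (or rather its uniqueness part, via Remark~\ref{remark:gabsde}) forces $(\widehat{Y},\widehat{Z})\equiv 0$; symmetrically, if $(\widehat{Y},\widehat{Z})\equiv 0$, the forward equation for $\widehat{X}$ becomes an SFDE with zero initial data and, by $(H3.2)$ and Theorem~\ref{thm:sfde}, $\widehat{X}\equiv 0$. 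The main obstacle, and the place to be most careful, is the very first step: verifying that the It\^o formula applied to $\langle G\widehat{X}_t,\widehat{Y}_t\rangle$ produces exactly the $A$-difference pairing with no leftover terms, which hinges on the precise placement of $G$ and $G^T$ in the definition of $A$ and on the terminal/initial conditions killing the boundary contributions; the functional (path-dependent and anticipated) nature of the coefficients does not actually complicate this computation because It\^o's formula only sees the one-dimensional semimartingales $\widehat{X}_t$ and $\widehat{Y}_t$, with the path-dependence absorbed into the drift terms that $(H3.4)$ already bounds.
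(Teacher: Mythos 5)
Your proposal is correct and follows essentially the same route as the paper's own proof: It\^o's formula applied to $\langle G\widehat{X}_t,\widehat{Y}_t\rangle$, the monotonicity condition $(H3.4)$ to force $\lambda_1 E\int|G\widehat{X}|^2+\lambda_2 E\int(|G^T\widehat{Y}|^2+|G^T\widehat{Z}|^2)+\mu E|G\widehat{X}_T|^2\leq 0$, a case split on $m>n$, $n>m$, $m=n$, and then recovery of the remaining component via the uniqueness of the GABSDE (Theorem~\ref{thm:gabsde} with Remark~\ref{remark:gabsde}) or of the SFDE (Theorem~\ref{thm:sfde}). The only cosmetic differences are that you phrase it as a contradiction argument and spell out the injectivity of $G$ and $G^T$ explicitly, which the paper leaves implicit.
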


\begin{proof}
Suppose that $U_\cdot=(X_\cdot, Y_\cdot, Z_\cdot)$ and
$U_\cdot^\prime=(X_\cdot^\prime, Y_\cdot^\prime, Z_\cdot^\prime)$
are two solutions of FBSFDE (\ref{equation:main}). We denote
$\widehat{U}_\cdot=(\widehat{X}_\cdot, \widehat{Y}_\cdot,
\widehat{Z})_\cdot=(X_\cdot-X_\cdot^\prime, Y_\cdot-Y_\cdot^\prime,
Z_\cdot-Z_\cdot^\prime)$. Applying It\^{o}'s formula to $\langle G
\widehat{X}_t, \widehat{Y}_t \rangle$ and noting $(H3.4)$, we have
\begin{align*}
& E \langle \Phi(X_T)-\Phi(X_T^\prime), G \widehat{X}_T \rangle \\
& = E \int_0^T \langle A(s, U_s, \alpha_s, \beta_s, \gamma_s)-A(s,
U_s^\prime, \alpha_s^\prime, \beta_s^\prime, \gamma_s^\prime),
\widehat{U}_s \rangle ds \\
& \leq -\lambda_1 E \int_{-M}^T |G \widehat{X}_s|^2ds -\lambda_2
E\int_0^{T+K} (|G^T
\widehat{Y}_s|^2 + |G^T \widehat{Z}_s|^2)ds\\
& = -\lambda_1 E \int_0^T |G \widehat{X}_s|^2ds -\lambda_2 E\int_0^T
(|G^T \widehat{Y}_s|^2 + |G^T \widehat{Z}_s|^2)ds,
\end{align*}
where the last equality is due to $X_s=X_s^\prime= \rho_s$ for $s\in
[-M, 0]$ and $(Y_s, Z_s)=(Y_s^\prime, Z_s^\prime)=(\xi_s, \eta_s)$
for $s\in (T, T+K]$.

Together with $(H3.4)$ again, we obtain
$$\lambda_1 E \int_0^T |G \widehat{X}_s|^2ds
+\lambda_2 E\int_0^T (|G^T \widehat{Y}_s|^2 + |G^T
\widehat{Z}_s|^2)ds + \mu |G \widehat{X}_T|^2 \leq 0.$$

For the case when $m > n$, we note that $\lambda_1
>0$ and $\mu >0$. Then it is easy to get that for $s\in [0, T]$, $|G \widehat{X}_s|^2 \equiv 0$, which
implies $\widehat{X}_s \equiv 0$. Thus $X_s \equiv X_s^\prime$, for
$s\in [0, T]$. Then according to Theorem \ref{thm:gabsde} together
with Remark \ref{remark:gabsde}, we know $(Y_s, Z_s)=(Y_s^\prime,
Z_s^\prime)$ for $s\in [0, T]$.

For the case when $n >m$, we note that $\lambda_2 >0$. Then for
$s\in [0, T]$, $|G^T \widehat{Y}_s |^2 \equiv 0$ and $|G^T
\widehat{Z}_s |^2 \equiv 0$, which implies $(Y_s, Z_s) \equiv
(Y_s^\prime, Z_s^\prime)$. Finally, from the uniqueness of SFDEs
(see Theorem \ref{thm:sfde}), it follows that $X_s \equiv
X_s^\prime$ for $s\in [0, T]$.

Similarly to the above two cases, for the case when $m=n$, the
result can be easily obtained.
\end{proof}

From now on, we will mainly study the existence of the solution to
FBSFDE (\ref{equation:main}). For this, we first consider the
following family of FBSFDEs parameterized by $\varepsilon \in [0,
1]$:

\begin{equation}\label{equation:epsilon}
\left\{
\begin{tabular}{ll}
$dX_t^\varepsilon=[(1-\varepsilon)\lambda_2(-G^T Y_t^\varepsilon)+\varepsilon b(t, \{X_r^\varepsilon\}_{r\in [-M, t]}, Y_t^\varepsilon, Z_t^\varepsilon)+\varphi_t]dt$ \vspace{2mm}\\
$\qquad\ \ \ + [(1-\varepsilon)\lambda_2(-G^T Z_t^\varepsilon)+\varepsilon \sigma(t, \{X_r^\varepsilon\}_{r\in [-M, t]}, Y_t^\varepsilon, Z_t^\varepsilon)+\phi_t]dB_t,$\ \ $t\in [0, T];$ \vspace{2mm}\\
$-dY_t^\varepsilon=[(1-\varepsilon)\lambda_1
GX_t^\varepsilon+\varepsilon f(t, X_t^\varepsilon,
\{Y_r^\varepsilon\}_{r\in [t, T+K]}, \{Z_r^\varepsilon\}_{r\in [t,
T+K]})+\psi_t]dt$ \vspace{2mm} \\
$\qquad\qquad - Z_t^\varepsilon dB_t, $\hspace{8.5cm}\ \ \ \
$t\in[0, T];$ \vspace{2mm}
\\
$X_t^\varepsilon=\rho_t,$\hspace{10.5cm}\ \ $t\in [-M, 0];$
\vspace{2mm}
\\
$Y_T^\varepsilon=\varepsilon \Phi(X_T^\varepsilon)+(1-\varepsilon)G
X_T^\varepsilon + \zeta,$\ \ \ $Y _t^\varepsilon\ =\ \xi_t,$
\hspace{4cm} $t\in(T, T+K];$ \vspace{2mm}
\\
$Z_t^\varepsilon=\eta_t, $\hspace{10.5cm}\ \ \ $t\in[T, T+K],$
\end{tabular}\right.
\end{equation}
where $\zeta\in L^2({\mathcal{F}_T}; \mathbb{R}^m)$, $\varphi_\cdot
\in L_{\mathcal{F}}^2(0, T; \mathbb{R}^n)$, $\phi_\cdot \in
L_{\mathcal{F}}^2(0, T; \mathbb{R}^{n\times d})$ and $\psi_\cdot \in
L_{\mathcal{F}}^2(0, T; \mathbb{R}^m)$. It is obvious that the
existence of (\ref{equation:main}) just follows from that of
(\ref{equation:epsilon}) when $\varepsilon=1$.

\begin{lemma}\label{lem:epsilon}
Assume that $(H3.1)$-$(H3.4)$ hold. If for an $\varepsilon_0\in [0,
1)$, there exists a solution $(X_\cdot^{\varepsilon_0},
Y_\cdot^{\varepsilon_0}, Z_\cdot^{\varepsilon_0})$ of FBSFDE
(\ref{equation:epsilon}), then there exists a positive constant
$\delta_0$, such that for each $\delta\in [0, \delta_0]$ there
exists a solution $(X_\cdot^{\varepsilon_0+\delta},
Y_\cdot^{\varepsilon_0+\delta}, Z_\cdot^{\varepsilon_0+\delta})$ of
FBSFDE (\ref{equation:epsilon}) for
$\varepsilon=\varepsilon_0+\delta$.
\end{lemma}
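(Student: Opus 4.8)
The plan is to use the classical method of continuation (continuation in parameter) for fully coupled forward-backward systems, adapted to the functional setting. Suppose $\varepsilon_0 \in [0,1)$ is such that (\ref{equation:epsilon}) has a solution for $\varepsilon = \varepsilon_0$ (for \emph{any} choice of the data $\varphi, \phi, \psi, \zeta$ in the relevant spaces). For a fixed $\delta > 0$ to be determined and an arbitrary given triple $(x_\cdot, y_\cdot, z_\cdot) \in L_{\mathcal F}^2(-M,T;\mathbb R^n) \times L_{\mathcal F}^2(0,T+K;\mathbb R^m) \times L_{\mathcal F}^2(0,T+K;\mathbb R^{m\times d})$, I would consider the FBSFDE at level $\varepsilon_0$ but with the generator and terminal data shifted by the $\varepsilon$-increment evaluated at $(x,y,z)$: concretely, one solves (\ref{equation:epsilon}) with $\varepsilon = \varepsilon_0$ and with new inhomogeneous terms
$$
\widetilde\varphi_t = \varphi_t + \delta\bigl(b(t,\{x_r\}_{r\in[-M,t]},y_t,z_t) + \lambda_2 G^T y_t\bigr),
$$
and analogously for $\widetilde\phi_t$ (using $\sigma$ and $\lambda_2 G^T z_t$), $\widetilde\psi_t$ (using $f$ and $-\lambda_1 Gx_t$), and $\widetilde\zeta = \zeta + \delta(\Phi(x_T) - Gx_T)$. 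By the induction hypothesis this has a unique solution $(X_\cdot, Y_\cdot, Z_\cdot)$; using (H3.1) and (H3.3) one checks the new data indeed lie in the required spaces. This defines a map $\mathcal{I}_{\varepsilon_0+\delta} : (x,y,z) \mapsto (X,Y,Z)$ on the product Hilbert space, and a fixed point of this map is precisely a solution of (\ref{equation:epsilon}) at $\varepsilon = \varepsilon_0 + \delta$.

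The heart of the argument is an a priori estimate showing $\mathcal{I}_{\varepsilon_0+\delta}$ is a contraction for $\delta$ small, uniformly in $\varepsilon_0$. So I would take two inputs $(x,y,z)$ and $(x',y',z')$, form the solutions $(X,Y,Z)$ and $(X',Y',Z')$, and write $\widehat X = X-X'$, etc., and similarly $\widehat x = x - x'$. Applying It\^o's formula to $e^{\beta t}\langle G\widehat X_t, \widehat Y_t\rangle$ (for a suitable exponential weight matching the weighted norms in (H3.2)), using the monotonicity conditions (H3.4) on the $\varepsilon_0$-part of the coefficients — which contributes a good sign $-\lambda_1 E\int |G\widehat X|^2 - \lambda_2 E\int(|G^T\widehat Y|^2 + |G^T\widehat Z|^2)$ plus the $\mu|G\widehat X_T|^2$ term — and estimating the extra $\delta$-terms by Cauchy–Schwarz and (H3.2), one gets an inequality of the shape
$$
(\text{coercive quadratic form in } \widehat X, \widehat Y, \widehat Z) \le C\delta \cdot \|(\widehat x,\widehat y,\widehat z)\|^2 + (\text{small}) \cdot \|(\widehat X,\widehat Y,\widehat Z)\|^2 .
$$
Here one must be careful, exactly as in the uniqueness proof of Theorem \ref{thm:uniqueness}, to separate the three cases $m>n$, $n>m$, $m=n$: in the case $m>n$ one first controls $\widehat X$ (via $\lambda_1 > 0$, $\mu > 0$) and then recovers $(\widehat Y,\widehat Z)$ from the stability of the GABSDE (Theorem \ref{thm:gabsde}, Remark \ref{remark:gabsde}); in the case $n>m$ one first controls $(\widehat Y,\widehat Z)$ via $\lambda_2 > 0$ and then recovers $\widehat X$ from the stability of the SFDE (Theorem \ref{thm:sfde}); the case $m=n$ combines the two. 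In each case one obtains $\|\mathcal{I}_{\varepsilon_0+\delta}(x,y,z) - \mathcal{I}_{\varepsilon_0+\delta}(x',y',z')\| \le \tfrac12 \|(x,y,z)-(x',y',z')\|$ provided $\delta \le \delta_0$ for some $\delta_0 > 0$ depending only on $L$, $\lambda_1$, $\lambda_2$, $\mu$, $G$, $T$, $K$, $M$ — crucially not on $\varepsilon_0$.

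The main obstacle I anticipate is handling the functional (path-dependent) nature of the coefficients inside the a priori estimate: the term $b(t,\{x_r\}_{r\in[-M,t]},y_t,z_t)$ depends on the whole past of $x$, so when it is estimated against $\widehat X$ one must use the weighted-norm version of (H3.2) — with the $e^{-\theta s}$ weight on the forward side and $e^{\theta s}$ on the backward side — and arrange the exponential factor $e^{\beta t}$ in the It\^o computation to be compatible with both sign conventions simultaneously, which is the reason the hypotheses were stated with an arbitrary $\theta$. A secondary technical point is that the forward equation here is genuinely of the ``full-history'' type $\{X_r\}_{r\in[-M,t]}$ rather than a delay $\{X_r\}_{r\in[t-M,t]}$, so the stability estimate for the SFDE used in the case $n>m$ is the one implicit in the proof of Theorem \ref{thm:sfde} (the contraction estimate there), which I would invoke in the form: the $\|\cdot\|_{-\theta}$-norm of the SFDE solution is controlled by that of the past data plus that of the inhomogeneous coefficients. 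Once these bookkeeping issues are settled, the contraction mapping theorem on the product Hilbert space furnishes the unique fixed point, hence the solution at $\varepsilon = \varepsilon_0 + \delta$, and the lemma follows.
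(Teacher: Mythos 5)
Your proposal is correct and follows essentially the same route as the paper: the continuation map $I_{\varepsilon_0+\delta}$ obtained by freezing the $\delta$-increment of the coefficients and terminal datum at the input $(x,y,z)$, the cross-term It\^{o} computation on $\langle G\widehat X_t,\widehat Y_t\rangle$ exploiting the monotonicity in $(H3.4)$, and separate forward (SFDE) and backward (GABSDE) stability estimates combined according to which of $\lambda_1,\mu$ or $\lambda_2$ is strictly positive, yielding a contraction for $\delta$ below a threshold independent of $\varepsilon_0$. Your explicit remark that the induction hypothesis must be solvability at level $\varepsilon_0$ for \emph{arbitrary} data $(\varphi,\phi,\psi,\zeta)$ is a point the paper leaves implicit, and is the correct reading of how the lemma is used.
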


\begin{proof}
Let $u_\cdot =(x_\cdot, y_\cdot, z_\cdot) \in L_{\mathcal{F}}^2(-M,
T; \mathbb{R}^n) \times L_{\mathcal{F}}^2(0, T+K; \mathbb{R}^m)
\times L_{\mathcal{F}}^2(0, T+K; \mathbb{R}^{m\times d})$. Then it
follows that there exists a unique triple $U_\cdot =(X_\cdot,
Y_\cdot, Z_\cdot) \in L_{\mathcal{F}}^2(-M, T; \mathbb{R}^n) \times
L_{\mathcal{F}}^2(0, T+K; \mathbb{R}^m) \times L_{\mathcal{F}}^2(0,
T+K; \mathbb{R}^{m\times d})$ satisfying the following FBSFDE:
\begin{equation*}
\left\{
\begin{tabular}{ll}
$dX_t=[(1-\varepsilon_0)\lambda_2(-G^T Y_t)+\varepsilon_0 b(t, \{X_r\}_{r\in [-M, t]}, Y_t, Z_t)+\varphi_t]dt$ \vspace{2mm}\\
$\qquad \ \ \ + \delta (\lambda_2 G^T y_t + b(t, \{x_r\}_{r\in [-M,
t]}, y_t, x_t)) dt $ \vspace{2mm}\\
$\qquad \ \ \ + [(1-\varepsilon_0)\lambda_2(-G^T Z_t)+\varepsilon_0
\sigma(t, \{X_r\}_{r\in [-M, t]}, Y_t, Z_t)+\phi_t]dB_t$
\vspace{2mm}\\
$\qquad\ \ \ + \delta \lambda_2 (G^T z_t + \sigma(t, \{x_r\}_{r\in [-M, t]}, y_t, z_t))dB_t,$ \hspace{3.5cm} $t\in [0, T];$ \vspace{2mm}\\
$-dY_t=[(1-\varepsilon_0)\lambda_1 GX_t+\varepsilon_0 f(t, X_t,
\{Y_r\}_{r\in [t, T+K]}, \{Z_r\}_{r\in [t,
T+K]})+\psi_t]dt$ \vspace{2mm} \\
$\qquad\qquad +\delta(-\lambda_1 Gx_t + f(t, x_t, \{y_r\}_{r\in [t,
T+K]}, \{z_r\}_{r\in [t,
T+K]}))dt$ \vspace{2mm}\\
$\qquad\qquad - Z_t dB_t, $\hspace{9.3cm} $t\in[0, T];$ \vspace{2mm}
\\
$X_t=\rho_t,$\hspace{11cm} $t\in [-M, 0];$ \vspace{2mm}
\\
$Y_T=\varepsilon_0 \Phi(X_T)+(1-\varepsilon_0)G X_T +\delta
(\Phi(x_T)-G x_T)+ \zeta,$\ \ \ $Y _t\ =\ \xi_t,$ \hspace{6mm}
$t\in(T, T+K];$ \vspace{2mm}
\\
$Z_t=\eta_t, $\hspace{11.2cm}$ t\in[T, T+K].$
\end{tabular}\right.
\end{equation*}

Our objective is to prove that for sufficiently small $\delta$, the
mapping $I_{\varepsilon_0+\delta}$, defined by
$U_\cdot=I_{\varepsilon_0+\delta}(u_\cdot)$ from
$L_{\mathcal{F}}^2(-M, T; \mathbb{R}^n) \times L_{\mathcal{F}}^2(0,
T+K; \mathbb{R}^m) \times L_{\mathcal{F}}^2(0, T+K;
\mathbb{R}^{m\times d})$ into itself, is a contraction mapping.

Let $u_\cdot^\prime = (x_\cdot^\prime, y_\cdot^\prime,
z_\cdot^\prime)$ be another element of $L_{\mathcal{F}}^2(-M, T;
\mathbb{R}^n) \times L_{\mathcal{F}}^2(0, T+K; \mathbb{R}^m) \times
L_{\mathcal{F}}^2(0, T+K; \mathbb{R}^{m\times d})$ and define
$U_\cdot^\prime = I_{\varepsilon_0+\delta} (u_\cdot^\prime)$. We
make the following notations:
\begin{align*}
& \widehat{u}_\cdot=(\widehat{x}_\cdot, \widehat{y}_\cdot,
\widehat{z}_\cdot)=(x_\cdot-x_\cdot^\prime, y_\cdot-y_\cdot^\prime,
z_\cdot-z_\cdot^\prime),\\
& \widehat{U}_\cdot=(\widehat{X}_\cdot, \widehat{Y}_\cdot,
\widehat{Z}_\cdot)=(X_\cdot-X_\cdot^\prime, Y_\cdot-Y_\cdot^\prime,
Z_\cdot-Z_\cdot^\prime),\\
&  \widehat{b}_t = b(t, \{x_{r}\}_{r\in [-M, t]}, y_t, z_t)-b(t,
\{x_{r}^\prime\}_{r\in [-M, t]}, y_t, z_t),\\
& \widehat{\sigma}_t = \sigma(t, \{x_{r}\}_{r\in [-M, t]}, y_t,
z_t)-\sigma(t, \{x_{r}^\prime\}_{r\in [-M, t]}, y_t, z_t),\\
& \widehat{f}_t = f(t, x_t, \{y_r\}_{r\in [t, T+K]}, \{z_r\}_{r\in
[t, T+K]})-f(t, x_t^\prime, \{y_r^\prime\}_{r\in [t, T+K]},
\{z_r^\prime\}_{r\in [t, T+K]}).
\end{align*}
Apply It\^{o}'s formula to $\langle G \widehat{X}_t, \widehat{Y}_t
\rangle$, and take expectation,
\begin{align*}
& \varepsilon_0 E \langle \Phi(X_T)-\Phi(X_T^\prime), G
\widehat{X}_T \rangle + (1-\varepsilon_0) E |G \widehat{X}_T|^2 +
\delta E \langle \Phi(x_T)-\Phi(x_T^\prime)
-G \widehat{x}_T, G\widehat{X}_T \rangle ds \\
& = E \int_0^T \varepsilon_0 \langle A(s, U_s, \alpha_s, \beta_s,
\gamma_s)-A(s, U_s^\prime, \alpha_s^\prime, \beta_s^\prime,
\gamma_s^\prime),
\widehat{U}_s \rangle ds \\
& \quad -(1-\varepsilon_0) E \int_0^T (\lambda_1 \langle G\widehat{X}_s, G\widehat{X}_s\rangle + \lambda_2 \langle G^T\widehat{Y}_s, G^T\widehat{Y}_s\rangle + \lambda_2 \langle G^T\widehat{Z}_s, G^T\widehat{Z}_s\rangle)ds\\
& \quad + \delta E\int_0^T (\lambda_1 \langle G\widehat{X}_s, G\widehat{x}_s \rangle + \lambda_2 \langle G^T\widehat{Y}_s, G^T\widehat{y}_s \rangle + \lambda_2 \langle G^T\widehat{Z}_s, G^T\widehat{z}_s \rangle \\
& \hspace{2.2cm}  + \langle \widehat{X}_s, -G^T \widehat{f}_s\rangle
+ \langle G^T \widehat{Y}_s, \widehat{b}_s\rangle +\langle
\widehat{Z}_s, G \widehat{\sigma}_s\rangle)ds.
\end{align*}
From $(H3.1)$-$(H3.4)$, we have
\begin{equation}\label{estimate:XYZ}
\begin{tabular}{rl}
& $(\varepsilon_0 \mu + (1-\varepsilon_0)) E |G\widehat{X}_T|^2+
\lambda_1 E \int_{-M}^T |G \widehat{X}_s|^2ds + \lambda_2 E
\int_0^{T+ K} (|G^T \widehat{Y}_s|^2+|G^T \widehat{Z}_s|^2)ds$ \vspace{3mm} \\
& $\leq C_1 \delta E \int_{-M}^T (|\widehat{X}_s|^2+
|\widehat{x}_s|^2)ds + C_1 \delta E\int_0^{T+K} (|\widehat{Y}_s|^2+
|\widehat{y}_s|^2 + |\widehat{Z}_s|^2+
|\widehat{z}_s|^2)ds$ \vspace{3mm} \\
& $\quad + C_1 \delta E |\widehat{X}_T|^2 + C_1 \delta E
|\widehat{x}_T|^2.$
\end{tabular}
\end{equation}
Here the constant $C_1$ depends on $G$, $L$, $\lambda_1$,
$\lambda_2$.

Next we will give two other estimates. On the one hand, similarly to
the proof of Theorem \ref{thm:sfde}, for any $\theta \geq 0$, by
applying It\^{o}'s formula to $e^{-\theta t}|\widehat{X}_t|^2$, we
have
\begin{align*}
& E e^{-\theta T} |\widehat{X}_T|^2 = E \int_0^T (-\theta)
e^{-\theta s} |\widehat{X}_s|^2ds + E \int_0^T e^{-\theta s}
|\widehat{\sigma}_s|^2ds + 2 E \int_0^T e^{-\theta s} \widehat{X}_s
\widehat{b}_s ds\\
& \leq E \int_0^T (-\frac{\theta}{2}) e^{-\theta s}
|\widehat{X}_s|^2ds + E \int_0^T e^{-\theta s}
|\widehat{\sigma}_s|^2ds + \frac{2}{\theta} E \int_0^T e^{-\theta s}
|\widehat{b}_s|^2 ds,
\end{align*}
where
\begin{align*}
\widehat{b}_s = & (1-\varepsilon_0) \lambda_2 (-G^T \widehat{Y}_s) +
\varepsilon_0 (b(s, \{X_r\}_{r\in[-M, s]}, Y_s, Z_s)-b(s,
\{X_r^\prime\}_{r\in[-M, s]}, Y_s^\prime, Z_s^\prime)) \\
& + \delta (\lambda_2 G^T \widehat{y}_s + b(s, \{x_r\}_{r\in[-M, s]}, y_s, z_s)-b(s, \{x_r^\prime\}_{r\in[-M, s]}, y_s^\prime, z_s^\prime)),\\
\widehat{\sigma}_s = & (1-\varepsilon_0) \lambda_2 (-G^T
\widehat{Z}_s) + \varepsilon_0 (\sigma(s, \{X_r\}_{r\in[-M, s]},
Y_s, Z_s)-\sigma(s, \{X_r^\prime\}_{r\in[-M, s]}, Y_s^\prime,
Z_s^\prime)) \\
& + \delta (\lambda_2 G^T \widehat{z}_s + \sigma(s,
\{x_r\}_{r\in[-M, s]}, y_s, z_s)-\sigma(s, \{x_r^\prime\}_{r\in[-M,
s]}, y_s^\prime, z_s^\prime)).
\end{align*}
According to $(H3.2)$,
\begin{align*}
& E \int_0^T e^{-\theta s}|\widehat{b}_s|^2ds \\
& \leq
4(1-\varepsilon_0)^2
\lambda_2^2 E \int_0^T e^{-\theta s} |G^T \widehat{Y}_s|^2ds \\
& +4 \varepsilon_0^2 L (E\int_{-M}^T e^{-\theta
s}|\widehat{X}_s|^2ds + E\int_{0}^{T} e^{-\theta s}
|\widehat{Y}_s|^2ds+E\int_{0}^{T}
e^{-\theta s} |\widehat{Z}_s|^2ds) \\
& +4 \delta^2 \lambda_2^2 E \int_0^T e^{-\theta s} |G^T
\widehat{y}_s|^2ds
\\
& \leq 4 \lambda_2^2 E \int_0^T e^{-\theta s} |G^T \widehat{Y}_s|^2ds \\
& +4 L (E\int_{-M}^T e^{-\theta s}|\widehat{X}_s|^2ds +
E\int_{0}^{T} e^{-\theta s} |\widehat{Y}_s|^2ds+E\int_{0}^{T}
e^{-\theta s} |\widehat{Z}_s|^2ds) \\
& +4 \delta^2 \lambda_2^2 E \int_0^T e^{-\theta s} |G^T
\widehat{y}_s|^2ds
\\
& + 4\delta^2 L (E\int_{-M}^T e^{-\theta s} |\widehat{x}_s|^2ds +
E\int_{0}^{T} e^{-\theta s} |\widehat{y}_s|^2ds+E\int_{0}^{T}
e^{-\theta s} |\widehat{z}_s|^2ds),
\end{align*}
and similarly,
\begin{align*}
& E \int_0^T e^{-\theta s} |\widehat{\sigma}_s|^2ds \\
& \leq 4 \lambda_2^2 E \int_0^T e^{-\theta s} |G^T \widehat{Z}_s|^2ds \\
& +4 L (E\int_{-M}^T e^{-\theta s} |\widehat{X}_s|^2ds +
E\int_{0}^{T} e^{-\theta s} |\widehat{Y}_s|^2ds+E\int_{0}^{T}
e^{-\theta s} |\widehat{Z}_s|^2ds) \\
& +4 \delta^2 \lambda_2^2 E \int_0^T e^{-\theta s} |G^T
\widehat{z}_s|^2ds
\\
& + 4\delta^2 L (E\int_{-M}^T e^{-\theta s} |\widehat{x}_s|^2ds +
E\int_{0}^{T} e^{-\theta s} |\widehat{y}_s|^2ds+E\int_{0}^{T}
e^{-\theta s} |\widehat{z}_s|^2ds).
\end{align*}
Thus,
\begin{align*}
& \frac{\theta}{2} E \int_0^T  e^{-\theta s} |\widehat{X}_s|^2ds \\
& \leq E \int_0^T e^{-\theta s} |\widehat{\sigma}_s|^2ds +
\frac{2}{\theta} E \int_0^T e^{-\theta s} |\widehat{b}_s|^2 ds
\\
& \leq 4 L (1+\frac{2}{\theta}) E \int_{-M}^T
e^{-\theta s} |\widehat{X}_s|^2ds \\
& +C_2 E \int_0^{T+K} (e^{-\theta s} |\widehat{Y}_s|^2+ e^{-\theta s} |\widehat{Z}_s|^2)ds \\
& + C_2 \delta^2 E \int_{-M}^T e^{-\theta s} |\widehat{x}_s|^2ds +
C_2 \delta^2 E \int_0^{T+K} e^{-\theta s}
(|\widehat{y}_s|^2+|\widehat{z}_s|^2)ds.
\end{align*}
Choosing $\theta$ sufficiently large, we can easily get the
following estimate:
\begin{equation}\label{estimate:X}
\begin{tabular}{rl}
& $E\int_{-M}^T |\widehat{X}_s|^2 ds$ \vspace{3mm}\\
& $\leq C_2 \delta^2 E \int_{-M}^T |\widehat{x}_s|^2ds + C_2
\delta^2 E \int_0^{T+K} (|\widehat{y}_s|^2+ |\widehat{z}_s|^2)ds +
C_2 E \int_0^{T+K} (|\widehat{Y}_s|^2+ |\widehat{Z}_s|^2)ds.$
\end{tabular}
\end{equation}
Here the constant $C_2$ depends on $G$, $L$, $\lambda_2$.

On the other hand, for $(\widehat{Y}_\cdot, \widehat{Z}_\cdot)$,
thanks to the estimate of BSDEs, together with $(H3.2)$, we can
easily derive
\begin{equation}\label{estimate:YZ}
\begin{tabular}{rl}
& $E\int_0^{T+K} (|\widehat{Y}_s|^2+ |\widehat{Z}_s|^2)ds$ \vspace{3mm} \\
& $\leq C_3 \delta^2 E \int_{-M}^T |\widehat{x}_s|^2ds + C_3
\delta^2 E \int_0^{T+K} (|\widehat{y}_s|^2+ |\widehat{z}_s|^2)ds +
C_3 \delta^2 E
|\widehat{x}_T|^2$ \vspace{3mm}\\
& $+ C_3 E\int_{-M}^T |\widehat{X}_s|^2ds + C_3 E
|\widehat{X}_T|^2.$
\end{tabular}
\end{equation}
Here the constant $C_3$ depends on $G$, $L$, $\lambda_1$.

Now combining the above three estimates
(\ref{estimate:XYZ})-(\ref{estimate:YZ}), and noting the fact that
$\mu >0$ implies $\varepsilon_0\mu + (1-\varepsilon_0) >0$, we can
easily check that, whenever $\lambda_1
>0$, $\mu >0$, $\lambda_2 \geq 0$ or $\lambda_1 \geq 0$, $\mu \geq 0$,
$\lambda_2 > 0$, the following always holds:
\begin{align*}
& E \int_{-M}^T |\widehat{X}_s|^2ds + E\int_0^{T+K}
(|\widehat{Y}_s|^2
+|\widehat{Z}_s|^2)ds + E |\widehat{X}_T|^2 \\
& \leq C (\delta+\delta^2) \left(E \int_{-M}^T |\widehat{x}_s|^2ds +
E\int_0^{T+K} (|\widehat{y}_s|^2 +|\widehat{z}_s|^2)ds + E
|\widehat{x}_T|^2\right),
\end{align*} where the
constant $C$ depends on $C_1$, $C_2$, $C_3$, $\lambda_1$,
$\lambda_2$, $\mu$.

Thus if we choose $\delta_0=\min\{1, \frac{1}{4C}\}$, we can clearly
see that, for each $\delta \in [0, \delta_0]$, the mapping
$I_{\varepsilon_0+\delta}$ is a strict contraction on
$L_{\mathcal{F}}^2(-M, T; \mathbb{R}^n) \times L_{\mathcal{F}}^2(0,
T+K; \mathbb{R}^m)\times L_{\mathcal{F}}^2(0, T+K;
\mathbb{R}^{m\times d})$ in the sense that
\begin{align*}
& E \int_{-M}^T |\widehat{X}_s|^2ds + E\int_0^{T+K} (|\widehat{Y}|^2
+|\widehat{Z}_s|^2)ds + E |\widehat{X}_T|^2 \\
& \leq \frac{1}{2} \left(E \int_{-M}^T |\widehat{x}_s|^2ds +
E\int_0^{T+K} (|\widehat{y}|^2 +|\widehat{z}_s|^2)ds + E
|\widehat{x}_T|^2\right).
\end{align*}

Then it follows by the fixed point theorem that the mapping
$I_{\varepsilon_0 +\delta}$ has a unique fixed point
$U_\cdot^{\varepsilon_0+\delta}=(X_\cdot^{\varepsilon_0+\delta},
Y_\cdot^{\varepsilon_0+\delta}, Z_\cdot^{\varepsilon_0+\delta})$,
which is the unique solution of (\ref{equation:epsilon}) for
$\varepsilon = \varepsilon_0 +\delta$.
\end{proof}

Now we give the main result of this part.

\begin{theorem}\label{thm:main}
Assume that $(H3.1)$-$(H3.4)$ hold. Then there exists a unique
adapted solution $(X_\cdot, Y_\cdot, Z_\cdot)$ of FBSFDE
(\ref{equation:main}).
\end{theorem}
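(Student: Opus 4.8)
The plan is to obtain existence by the method of continuation in the parameter $\varepsilon\in[0,1]$ appearing in (\ref{equation:epsilon}), using Lemma \ref{lem:epsilon} as the one-step induction and leaving uniqueness to Theorem \ref{thm:uniqueness}. To begin with, note that taking $\varepsilon=1$ together with $\zeta=0$ and $\varphi_\cdot=\phi_\cdot=\psi_\cdot\equiv 0$ turns (\ref{equation:epsilon}) into exactly (\ref{equation:main}); so it is enough to prove that (\ref{equation:epsilon}) is solvable at $\varepsilon=1$, and there is no loss (only convenience) in proving it for all admissible source data $(\zeta,\varphi_\cdot,\phi_\cdot,\psi_\cdot)$. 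As for uniqueness, any two adapted solutions of (\ref{equation:main}) must coincide by Theorem \ref{thm:uniqueness}, so nothing further is needed there.

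For existence, let $\mathcal{S}$ be the set of $\varepsilon\in[0,1]$ for which (\ref{equation:epsilon}) has an adapted solution for \emph{every} choice of $\zeta\in L^2(\mathcal{F}_T;\mathbb{R}^m)$, $\varphi_\cdot\in L_{\mathcal{F}}^2(0,T;\mathbb{R}^n)$, $\phi_\cdot\in L_{\mathcal{F}}^2(0,T;\mathbb{R}^{n\times d})$, $\psi_\cdot\in L_{\mathcal{F}}^2(0,T;\mathbb{R}^m)$. The content of Lemma \ref{lem:epsilon} is precisely that there is a constant $\delta_0>0$, \emph{independent of} $\varepsilon_0$ (one may take $\delta_0=\min\{1,\tfrac{1}{4C}\}$ with $C$ depending only on $G$, $L$, $\lambda_1$, $\lambda_2$, $\mu$), such that $\varepsilon_0\in\mathcal{S}$ implies $[\varepsilon_0,\varepsilon_0+\delta_0]\cap[0,1]\subset\mathcal{S}$. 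Hence, provided $0\in\mathcal{S}$, after at most $\lceil 1/\delta_0\rceil$ applications of Lemma \ref{lem:epsilon} we reach $1\in\mathcal{S}$, which gives the desired existence.

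So everything comes down to the base case $0\in\mathcal{S}$, and this is where the real work sits. When $\varepsilon=0$ the data $b,\sigma,f,\Phi$ disappear and (\ref{equation:epsilon}) collapses to the \emph{linear}, fully coupled FBSDE with $dX_t=(-\lambda_2 G^TY_t+\varphi_t)dt+(-\lambda_2 G^TZ_t+\phi_t)dB_t$, $-dY_t=(\lambda_1GX_t+\psi_t)dt-Z_tdB_t$, and boundary data $X_t=\rho_t$ on $[-M,0]$, $Y_T=GX_T+\zeta$, $Y_t=\xi_t$ on $(T,T+K]$, $Z_t=\eta_t$ on $[T,T+K]$. Observe that here the forward equation carries no functional dependence on $\{X_r\}_{r\le t}$ and the backward generator carries no anticipated dependence, so on $[0,T]$ this is nothing but a classical linear FBSDE with deterministic coefficients; moreover it satisfies the monotonicity embodied in $(H3.4)$, since $\langle A_0(u)-A_0(u'),u-u'\rangle=-\lambda_1|G\widehat x|^2-\lambda_2|G^T\widehat y|^2-\lambda_2|G^T\widehat z|^2$ for the corresponding linear map $A_0$. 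Its unique solvability for arbitrary $(\zeta,\varphi_\cdot,\phi_\cdot,\psi_\cdot)$ can be reached, for instance, via the decoupling ansatz $Y_t=P_tX_t+p_t$, which reduces the problem to a Riccati-type matrix equation $\dot P_t=\lambda_2P_tG^TP_t-\lambda_1G$ with $P_T=G$ together with a linear BSDE for $p_\cdot$; equivalently, one may invoke the known solvability of linear monotone FBSDEs as in Peng--Wu \cite{PW}. The main obstacle is exactly this step: one must make sure the $\varepsilon=0$ coupled linear FBSDE is solvable over the whole interval $[0,T]$ (equivalently, that the associated Riccati equation does not explode), and it is here that the full rank of $G$ and the sign restrictions on $\lambda_1,\lambda_2,\mu$ in $(H3.4)$ are genuinely used; once the base case is secured, the continuation scheme together with Theorem \ref{thm:uniqueness} finishes the proof mechanically.
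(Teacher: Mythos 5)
Your proposal is correct and follows essentially the same route as the paper: uniqueness is delegated to Theorem \ref{thm:uniqueness}, and existence is obtained by continuation in $\varepsilon$, starting from the linear monotone FBSDE at $\varepsilon=0$ and iterating Lemma \ref{lem:epsilon} finitely many times since $\delta_0$ does not depend on $\varepsilon_0$. The only difference is cosmetic: where you sketch the base case via a Riccati decoupling (or, equivalently, linear monotone FBSDE theory), the paper simply cites Theorem 2.6 of Peng--Wu \cite{PW}, and your observation that solvability must be tracked for \emph{all} source data $(\zeta,\varphi_\cdot,\phi_\cdot,\psi_\cdot)$ is exactly the right reading of how Lemma \ref{lem:epsilon} is applied.
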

\begin{proof}
The uniqueness is an immediate result from Theorem
\ref{thm:uniqueness}. Next we prove the existence.

Note that FBSFDE (\ref{equation:epsilon}) for $\varepsilon=0$ admits
a unique solution (see Theorem 2.6 in \cite{PW}). Thus from Lemma
\ref{lem:epsilon}, there exists a positive constant $\delta_0$ such
that, for each $\delta\in [0, \delta_0]$, (\ref{equation:epsilon})
for $\varepsilon=\varepsilon_0+\delta$ admits a unique solution.
Repeat this process for $N$ times with $1\leq N\delta_0
<1+\delta_0$, then we can obtain that particularly for
$\varepsilon=1$ with $\phi_\cdot=0$, $\varphi_\cdot=0$, $\psi_0=0$
and $\zeta=0$, (\ref{equation:epsilon}) has a unique solution, i.e.,
FBSFDE (\ref{equation:main}) has a unique solution.
\end{proof}

\section{Quadratic Optimal Control Problem for Functional Stochastic Systems}

Let $\rho_\cdot \in C(-M, 0; \mathbb{R}^n)$, and let $v_\cdot$ be an
admissible control process, i.e., an $\mathcal{F}_t$-adapted square
integrable process taking values in a given subset of
$\mathbb{R}^k$. Then we consider the following control system:
\begin{equation}\label{equation:control forward}
\left\{
\begin{tabular}{rlll}
$dX_t$ &=& $(A_t \int_{-M}^t X_sds + C_t v_t)dt + (D_t \int_{-M}^t X_sds + F_t v_t) dB_t,$ & $t\in [0, T];$ \vspace{2mm}\\
$X_t$ &=& $\rho_t,$ & $t\in [-M, 0],$
\end{tabular}\right.
\end{equation}
where $A_\cdot$, $C_\cdot$, $D_\cdot$ and $F_\cdot$ are bounded
progressively measurable matrix-valued processes with appropriate
dimensions. Then according to Theorem \ref{thm:sfde} and Remark
\ref{remark:sfde}, SFDE (\ref{equation:control forward}) admits a
unique solution.

The classical quadratic optimal control problem is to minimize the
cost function
$$J(v_\cdot)=\frac{1}{2}E [\int_0^T (\langle R_t X_t, X_t \rangle + \langle N_t v_t, v_t \rangle)dt + \langle Q X_T, X_T \rangle],$$
where $Q$ is an $\mathcal{F}_T$-measurable nonnegative symmetric
bounded matrix, $R_\cdot$ is an $n \times n$ nonnegative symmetric
bounded progressively measurable matrix-valued process, $N_\cdot$ is
an $k\times k$ positive symmetric bounded progressively measurable
matrix-valued process and its inverse $N_\cdot^{-1}$ is also
bounded.

The following theorem tells us that, for the above optimal control
problem, we can find the explicit form of the optimal control
$u_\cdot$ satisfying $$J(u_\cdot)=\inf_{v_\cdot} J(v_\cdot),$$ by
means of the fully coupled forward-backward stochastic functional
differential equations.

\begin{theorem}
The process $$u_t=-N_t^{-1} (C_t^T Y_t + F_t^T Z_t),\ \ \ t\in [0,
T]$$ is the unique optimal control which satisfies
$$J(u_\cdot)=\inf_{v_\cdot} J(v_\cdot),$$ where $(X_\cdot, Y_\cdot,
Z_\cdot)$ is the unique solution of the following FBSFDE:
\begin{equation}\label{equation:control forward backward}
\left\{
\begin{tabular}{rlll}
$dX_t$ &=& $[A_t \int_{-M}^t X_sds - C_t N_t^{-1} (C_t^T Y_t +
F_t^T Z_t)]dt$ & \vspace{2mm} \\
& & $+ [D_t \int_{-M}^t X_sds - F_t N_t^{-1} (C_t^T Y_t + F_t^T Z_t)]dB_t,$ & $t\in [0, T];$ \vspace{2mm}\\
$-dY_t$ &=& $[E^{\mathcal{F}_t} (\int_t^{T+K} A_s^T Y_s ds) +
E^{\mathcal{F}_t} (\int_t^{T+K}C_s^T Z_sds) + R_t X_t]dt$ &
\vspace{2mm} \\
& & $- Z_t dB_t, $ & $ t\in[0, T];$ \vspace{2mm}
\\
$X_t$ &=& $\rho_t,$ & $t\in [-M, 0];$ \vspace{2mm}
\\
$Y_T$ &=& $Q X_T,$\ \ \ $Y _t\ \ =\ \ 0,$ & $t\in(T, T+K];$
\vspace{2mm}
\\
$Z_t$ &=& $0, $ & $t\in[T, T+K].$
\end{tabular}\right.
\end{equation}
\end{theorem}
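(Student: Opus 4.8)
The plan is to prove this via the standard variational (convexity) argument for linear-quadratic stochastic control, combined with the existence/uniqueness result for FBSFDEs from Theorem \ref{thm:main}. First I would check that FBSFDE (\ref{equation:control forward backward}) falls under the scope of Theorem \ref{thm:main}: the forward coefficients $b, \sigma$ are linear in $(\{X_r\}, Y, Z)$ with the functional dependence entering only through $\int_{-M}^t X_s\,ds$, which satisfies (H3.2) by the Remark \ref{remark:sfde}-type estimates; the backward generator is linear with the anticipated terms entering through conditional expectations $E^{\mathcal{F}_t}[\int_t^{T+K}\cdot\,ds]$, which satisfies (A2.1) and hence (H3.2); the boundedness of $A,C,D,F,N^{-1},R,Q$ gives (H3.1) and (H3.3). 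For the monotonicity condition (H3.4), with $G$ the identity (here $m=n$) one computes $\langle A(s,u,\cdots)-A(s,u',\cdots), u-u'\rangle$ and finds that the ``forward'' contributions from $b,\sigma$ and the ``backward'' contribution from $f$ cancel against each other in the cross terms, leaving exactly $-\langle R_t\widehat{X}_t,\widehat{X}_t\rangle - \langle C_t N_t^{-1}\cdots\rangle$-type expressions which are $\leq 0$ by nonnegativity of $R$ and positivity of $N$; likewise $\langle Qx - Qx', x-x'\rangle\geq 0$. So $\lambda_1=\lambda_2=\mu=0$ works — but one must double-check that the degenerate case $\lambda_1=\lambda_2=0$, $\mu=0$ is actually admissible under the hypotheses as stated (the condition requires $\lambda_1+\lambda_2>0$, so I would instead absorb a genuinely positive multiple of $|\widehat X|^2$ using the $\langle \widehat X, -C N^{-1}(C^TY+F^TZ)\rangle$ structure, or more simply invoke that $m=n$ and argue directly). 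This is the part that needs the most care.

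Next I would carry out the variational computation. Fix an arbitrary admissible control $v_\cdot$ and let $u_\cdot$ be the candidate. Write $X^v_\cdot$, $X^u_\cdot$ for the corresponding state trajectories, set $\widetilde X_\cdot = X^v_\cdot - X^u_\cdot$, which solves the SFDE $d\widetilde X_t = (A_t\int_{-M}^t\widetilde X_s\,ds + C_t(v_t-u_t))\,dt + (D_t\int_{-M}^t\widetilde X_s\,ds + F_t(v_t-u_t))\,dB_t$ with $\widetilde X_t = 0$ on $[-M,0]$. The key step is to apply It\^o's formula to $\langle Y_t, \widetilde X_t\rangle$ on $[0,T]$, take expectations, and use the backward equation together with the anticipated-term identity: terms like $E\int_0^T\langle E^{\mathcal{F}_t}[\int_t^{T+K}A_s^TY_s\,ds], \widetilde X_t\rangle\,dt$ must be re-expressed. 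Here I would use the Fubini-type manipulation for conditional expectations together with $\widetilde X$ being adapted — actually, since $Y_s = 0$ for $s \in (T,T+K]$, the anticipated terms reduce to $E\int_0^T\langle\int_t^{T}A_s^TY_s\,ds,\cdots\rangle$, and after exchanging the order of integration one matches them against the $\int\langle A_t\int_{-M}^t\widetilde X_s ds, Y_t\rangle$ terms coming from $d\widetilde X$. After the dust settles, one obtains the identity $E\langle QX^u_T,\widetilde X_T\rangle + E\int_0^T\langle R_tX^u_t,\widetilde X_t\rangle\,dt = E\int_0^T\langle C_t^TY_t + F_t^TZ_t, v_t-u_t\rangle\,dt = -E\int_0^T\langle N_t u_t, v_t - u_t\rangle\,dt$, using the definition of $u_t$.

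Finally I would conclude by convexity. Expand $J(v_\cdot) - J(u_\cdot)$ directly: using $X^v = X^u + \widetilde X$ and bilinearity,
\begin{align*}
J(v_\cdot) - J(u_\cdot) &= E\int_0^T\big(\langle R_tX^u_t,\widetilde X_t\rangle + \tfrac12\langle R_t\widetilde X_t,\widetilde X_t\rangle + \langle N_t u_t, v_t-u_t\rangle + \tfrac12\langle N_t(v_t-u_t), v_t-u_t\rangle\big)dt \\
&\quad + E\big(\langle QX^u_T,\widetilde X_T\rangle + \tfrac12\langle Q\widetilde X_T,\widetilde X_T\rangle\big).
\end{align*}
The cross terms sum to zero by the identity derived in the previous step, leaving $J(v_\cdot) - J(u_\cdot) = \tfrac12 E\int_0^T(\langle R_t\widetilde X_t,\widetilde X_t\rangle + \langle N_t(v_t-u_t),v_t-u_t\rangle)dt + \tfrac12 E\langle Q\widetilde X_T,\widetilde X_T\rangle \geq 0$, with equality forcing $\langle N_t(v_t-u_t),v_t-u_t\rangle = 0$ a.e., hence $v_t = u_t$ by positive-definiteness of $N_t$. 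This gives both optimality and uniqueness. The main obstacle I anticipate is the bookkeeping in the It\^o/Fubini step for the anticipated (conditional-expectation) terms in the backward equation — making precise that $E\int_0^T\langle E^{\mathcal{F}_t}[\int_t^{T}A_s^TY_s ds],\widetilde X_t\rangle dt$ pairs off correctly against the functional drift of $\widetilde X$ — everything else is routine LQ manipulation.
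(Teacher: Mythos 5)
Your proposal follows the same overall route as the paper: solvability of the FBSFDE from Theorem \ref{thm:main}, a duality identity obtained by applying It\^{o}'s formula to $\langle X_t^v-X_t, Y_t\rangle$ with the Fubini-type cancellation of the anticipated conditional-expectation terms against the functional drift (using both $\widetilde X\equiv 0$ on $[-M,0]$ and $Y\equiv 0$, $Z\equiv 0$ on $(T,T+K]$, exactly as the paper does), and then a convexity expansion of $J(v_\cdot)-J(u_\cdot)$. Two differences are worth recording. First, your uniqueness argument is genuinely different and cleaner: you keep the exact second-order expansion, so that $J(v_\cdot)-J(u_\cdot)=\tfrac12 E\int_0^T(\langle R_t\widetilde X_t,\widetilde X_t\rangle+\langle N_t(v_t-u_t),v_t-u_t\rangle)dt+\tfrac12 E\langle Q\widetilde X_T,\widetilde X_T\rangle$, and uniqueness falls out of the positivity of $N_\cdot$ at once; the paper instead discards the quadratic remainder to get only $J(v_\cdot)-J(u_\cdot)\geq 0$ and then runs a separate midpoint-convexity argument with two putative optimal controls. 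Second, your worry about the monotonicity hypothesis is well founded and is in fact a point the paper passes over in silence: with $R_\cdot$ and $Q$ merely nonnegative, the natural computation gives $\lambda_1=\lambda_2=\mu=0$, which violates the standing requirement $\lambda_1+\lambda_2>0$, $\lambda_2+\mu>0$ in $(H3.4)$, yet the paper simply cites Theorem \ref{thm:main} without comment. You flag this but do not resolve it, so on that step your proposal is no weaker (and no stronger) than the paper's own proof; a fully rigorous treatment would need either a strengthened positivity assumption or a direct solvability argument for this linear FBSFDE. (Incidentally, for the duality to close, the adjoint generator should carry $E^{\mathcal{F}_t}\int_t^{T+K}D_s^T Z_s\,ds$ rather than $C_s^T Z_s$ as printed in the theorem statement; the paper's proof silently uses the $D_s^T$ version.) Everything else in your computation matches the paper's.
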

\begin{proof}
From Theorem \ref{thm:main}, we know that FBSFDE
(\ref{equation:control forward backward}) admits a unique solution
$(X_\cdot, Y_\cdot, Z_\cdot)$. Denote the unique solution of SFDE
(\ref{equation:control forward}) by $X_\cdot^v$ for the control
$v_\cdot$.

Applying It\^{o}'s formula to $\langle X_t^v-X_t, Y_t \rangle$, and
taking expectation, we have
\begin{align*}
& E\langle X_T^v-X_T, Y_T \rangle \\
& =- E \int_0^T [\langle E^{\mathcal{F}_t}(\int_t^{T+K} A_s^T
Y_sds)+ E^{\mathcal{F}_t}(\int_t^{T+K} D_s^T Z_sds) + R_t X_t, X_t^v-X_t \rangle]dt \\
& \quad + E \int_0^T (\langle A_t \int_{-M}^{t} (X_s^v-X_s) ds,
Y_t \rangle + \langle C_t (v_t-u_t), Y_t \rangle) dt \\
& \quad + E \int_0^T (\langle D_t \int_{-M}^t (X_s^v - X_s)ds, Z_t
\rangle + \langle F_t(v_t-u_t, Z_t) \rangle) dt.
\end{align*}
Note that
\begin{align*}
& E \int_0^T (\langle A_t \int_{-M}^{t} (X_s^v-X_s) ds, Y_t \rangle
- \langle E^{\mathcal{F}_t}(\int_t^{T+K} A_s^T Y_sds), X_t^v-X_t
\rangle )dt \\
& = E \int_0^T \langle A_t \int_{-M}^{t} (X_s^v-X_s) ds, Y_t
\rangle dt - E \int_0^T \langle \int_t^{T+K} A_s^T Y_sds, X_t^v-X_t \rangle dt \\
& = E \int_0^T \langle A_t \int_{-M}^{t} (X_s^v-X_s) ds, Y_t
\rangle dt \\
& \qquad - E \int_0^T \langle A_t \int_{0}^{t} (X_s^v-X_s) ds, Y_t
\rangle dt - E \int_T^{T+K} \langle A_t \int_{0}^{T} (X_s^v-X_s) ds,
Y_t \rangle dt \\
& = E \int_0^T \langle A_t \int_{-M}^{0} (X_s^v-X_s) ds, Y_t \rangle
dt - E \int_T^{T+K} \langle A_t \int_{0}^{T} (X_s^v-X_s) ds,
Y_t \rangle dt \\
& = 0,
\end{align*}
and similarly,
\begin{align*}
E \int_0^T (\langle D_t \int_{-M}^{t} (X_s^v-X_s) ds, Z_t \rangle -
\langle E^{\mathcal{F}_t}(\int_t^{T+K} D_s^T Z_sds), X_t^v-X_t
\rangle )dt = 0.
\end{align*}
Combining the above three equalities, we have
\begin{align*}
E\langle X_T^v-X_T, Y_T \rangle = E \int_0^T (\langle -R_t X_t,
X_t^v-X_t \rangle + \langle C_t (v_t-u_t), Y_t \rangle + \langle
F_t(v_t-u_t), Z_t \rangle) dt,
\end{align*}
which implies
\begin{equation}\label{eqation:control}
E\langle X_T^v-X_T, Y_T \rangle + E \int_0^T \langle R_t X_t,
X_t^v-X_t \rangle = E \int_0^T ( \langle C_t (v_t-u_t), Y_t \rangle
+ \langle F_t(v_t-u_t), Z_t \rangle) dt.
\end{equation}

On the other hand,
\begin{align*}
& J(v_\cdot)-J(u_\cdot) \\
& = \frac{1}{2}E[\int_0^T(\langle R_t X_t^v, X_t^v \rangle - \langle
R_t X_t, X_t \rangle + \langle N_t v_t, v_t \rangle - \langle N_t
u_t, u_t \rangle)dt \\
& \quad \qquad + \langle Q X_T^v, X_T^v \rangle - \langle Q X_T, X_T
\rangle] \\
& = \frac{1}{2} E [\int_0^T(\langle R_t (X_t^v-X_t), X_t^v-X_t
\rangle
+2 \langle R_t X_t, X_t^v-X_t \rangle \\
& \qquad \qquad + \langle N_t (v_t-u_t, v_t-u_t \rangle +2 \langle
N_t u_t, v_t-u_t \rangle)dt \\
& \quad \qquad + \langle Q (X_T^v-X_T), X_T^v-X_T \rangle +2 \langle
Q X_T, X_T^v-X_T \rangle] \\
& \geq E \int_0^T( \langle R_t X_t, X_t^v-X_t \rangle + \langle N_t
u_t, v_t-u_t \rangle)dt + \langle Q X_T, X_T^v-X_T \rangle],
\end{align*}
where the last inequality is due to the positivity of $N_\cdot$, and
the nonnegativity of $R_\cdot$ and $Q$. Then together with
(\ref{eqation:control}), and noting that $Y_T=Q X_T$, we obtain
\begin{align*}
& J(v_\cdot)-J(u_\cdot) \\
& \geq E \int_0^T( \langle R_t X_t, X_t^v-X_t \rangle + \langle N_t
u_t, v_t-u_t \rangle)dt + \langle Q X_T, X_T^v-X_T \rangle] \\
& = E \int_0^T (\langle C_t (v_t-u_t), Y_t \rangle + \langle
F_t(v_t-u_t), Z_t \rangle + \langle N_t u_t, v_t-u_t \rangle )dt \\
& =0.
\end{align*}
Hence, $u_t=-N_t^{-1} (C_t^T Y_t + F_t^T Z_t)$ is an optimal
control.

Moreover, the optimal control is unique. In fact, assume that
$u_\cdot$ and $u_\cdot^\prime$ are both optimal controls, and denote
$J(u_\cdot)=J(u_\cdot^\prime) \triangleq J \geq 0$. The
corresponding trajectories are $X_\cdot$ and $X_\cdot^\prime$. It is
easy to check that for the control
$\frac{u_\cdot+u_\cdot^\prime}{2}$, the trajectory is
$\frac{X_\cdot+X_\cdot^\prime}{2}$. Then,
\begin{align*}
2J & = J(u_\cdot) + J(u_\cdot^\prime) \\
& = \frac{1}{2}E [\int_0^T (\langle R_t X_t, X_t \rangle + \langle
N_t u_t, u_t \rangle)dt + \langle Q X_T, X_T \rangle] \\
& \quad + \frac{1}{2}E [\int_0^T (\langle R_t X_t^\prime, X_t^\prime
\rangle + \langle N_t u_t^\prime, u_t^\prime \rangle)dt + \langle Q
X_T^\prime, X_T^\prime \rangle] \\
& = 2 J(\frac{u_\cdot+u_\cdot^\prime}{2}) + E [\int_0^T (\langle R_t
\frac{X_t-X_t^\prime}{2}, \frac{X_t-X_t^\prime}{2} \rangle + \langle
N_t \frac{u_t-u_t^\prime}{2}, \frac{u_t-u_t^\prime}{2} \rangle)dt]
\\
& \quad + E \langle Q \frac{X_T-X_T^\prime}{2},
\frac{X_T-X_T^\prime}{2} \rangle \\
& \geq 2J + E \int_0^T \langle N_t \frac{u_t-u_t^\prime}{2},
\frac{u_t-u_t^\prime}{2} \rangle dt,
\end{align*}
where the last inequality is due to the nonnegativity of $R_\cdot$
and $Q$.

Therefore $u_\cdot =u_\cdot^\prime$, thanks to the positivity of
$N_\cdot$.
\end{proof}

\begin{remark}
It should be mentioned here that, the method we applied to prove the
uniqueness above is in fact a classical method, readers are referred
to \cite{CW} or \cite{W}.
\end{remark}



%
%


\begin{thebibliography}{00}

\bibitem{CW}
L. Chen, Z. Wu, A type of general forward-backward stochastic
differential equations and applications, Chin. Ann. Math. 32B(2)
(2011) 279-292.

\bibitem{CM}
J. Cvitanic, J. Ma, Hedging options for a large investor and
forward-backward SDE's, Ann. Appl. Probab. 6(2) (1996) 370-398.

\bibitem{KPQ}
N. El Karoui, S.Peng, M.C.Quenez, Backward stochastic differential
equations in finance, Math. Finance 7 (1997) 1-71.

\bibitem{HP}
Y. Hu, S. Peng, Solution of forward-backward stochastic differential
equations, Prob. Theory Rel. Fields 103(2) (1995) 273-283.

\bibitem{MPY}
J. Ma, P. Protter, J. Yong, Solving forward-backward stochastic
differential equations explicitly- a four step scheme, Prob. Theory
Rel. Fields 98 (1994) 339-359.

\bibitem{M}
S-E. A. Mohammed, Stochastic functional differential equations,
London: Research Notes in Mathematics, 1984.

\bibitem{PP1}
E. Pardoux, S. Peng, Adapted solution of a backward stochastic
differential equation, Systems Control Lett. 14 (1990) 55--61.

\bibitem{PW}
S. Peng, Z. Wu, Fully coupled forward-backward stochastic
differential equations and applications to optimal control, SIAM J.
Control Optim. 37(3) (1999) 825-843.

\bibitem{PY}
S. Peng, Z. Yang, Anticipated backward stochastic differential
equations, Ann. Probab. 37 (2009) 877--902.

\bibitem{W0}
Z. Wu, Maximum principle for optimal control problem of fully
coupled forward-backward stochastic systems, Systems Sci. Mathe.
Sci. 11(3) (1998) 249-259.

\bibitem{W}
Z. Wu, Forward-backward stochastic differential equations, linear
quadratic stochastic optimal control and nonzero sum differential
games, J. Syst. Sci. Complexity 18(2) (2005) 179-192.

\bibitem{Y}
Z. Yang, Anticipated BSDEs and related results in SDEs, Doctoral
Dissertation Jinan: Shandong University, 2007.

\bibitem{YE}
Z. Yang, R. J. Elliott, Generalized anticipated backward stochastic
differential equations, 2012, submitted for publication.

\bibitem{Y}
J. Yong, Finding adapted solution of forward backward stochastic
differential equations-method of contituation, Prob. Theory Rel.
Fields 107(4) (1997) 537-572.

\bibitem{YJ}
Z. Yu, S. Ji, Linear-quadratic nonzero-sum differential game of
backward stochastic differential equations, Proceedings of the 27th
Chinese Control Conference, Kunming, Yunnan (2008) 562-566.

\end{thebibliography}
\end{document}